\definecolor{lightgrey}{gray}{0.9}
\newcommand{\expect}[1]{\mathbb{E} \!\left [ #1\right]}
\newcommand{\abs}[1]{\left\lvert #1 \right\rvert}
\newcommand{\innermid}{\nonscript\;\delimsize\vert\nonscript\;}
\newcommand{\activatebar}{%
  \begingroup\lccode`\~=`\|
  \lowercase{\endgroup\let~}\innermid 
  \mathcode`|=\string"8000
}
\newcommand{\C}{\mathbb{C}}
\newcommand{\R}{\mathbb{R}}
\newcommand{\PP}{\mathbb{P}}
\newcommand{\suchthat}{\;\ifnum\currentgrouptype=16 \middle\fi|\;}
\DeclareMathOperator\supp{supp}
\DeclareMathOperator\Pers{Pers}
\DeclareMathOperator\Lip{Lip}
\DeclareMathOperator\Dgm{Dgm}
\newcommand{\al}{\alpha}
\newcommand{\veps}{\varepsilon}
\newcommand{\vp}{\varphi}
\newcommand{\bd}{\begin{displaymath}
\begin{tikzcd}}
\newcommand{\ed}{\end{tikzcd}
\end{displaymath}}
\newcommand{\bmat}{\begin{pmatrix}}
\newcommand{\emat}{\end{pmatrix}}
\newcommand{\be}{\begin{equation}}
\newcommand{\ee}{\end{equation}}
\newcommand{\btikz}{\begin{tikzcd}}
\newcommand{\etikz}{\end{tikzcd}}
\newcommand{\bea}{\begin{eqnarray}}
\newcommand{\eea}{\end{eqnarray}}
\newcommand{\bse}{\begin{subequations}}
\newcommand{\ese}{\end{subequations}}
\newcommand{\bc}{\begin{center}}
\newcommand{\ec}{\end{center}}
\newcommand{\half}{\frac{1}{2}}
\newcommand{\norm}[1]{\left\lVert#1\right\rVert}
\newcommand{\DD}{\mathcal{D}}
\newcommand{\comment}[1]{}
\newcommand{\cf}{{\it cf. }}
\newcommand{\Lag}{{\mathcal{L}}}
\def\blob[#1]{~\parbox{#1mm}{
\begin{fmfgraph*}(#1,#1)
\fmfleft{i1}
\fmfright{o1}
\fmf{phantom}{i1,v1,o1}
\fmfblob{0.4w}{v1}
\end{fmfgraph*}}~}
\def\vertex[#1]{~\parbox{#1mm}{
  \begin{fmfgraph*}(#1,#1)
    \fmfleft{i1, i2}
    \fmfright{o1,o2}
    \fmf{phantom}{i1,i2,o2,o1,i1}
    \fmf{plain}{i1,v,i2}
    \fmf{plain}{o1,v,o2}
    \fmfforce{nw}{i1}
    \fmfforce{sw}{i2}
    \fmfforce{se}{o1}
    \fmfforce{ne}{o2}
    \fmfforce{c}{v}
    \fmfdot{v}
  \end{fmfgraph*}
  }~}
\def\othervertex[#1]{~\parbox{#1mm}{
  \begin{fmfgraph*}(#1,#1)
    \fmfleft{i1, i2}
    \fmfright{o1,o2}
    \fmf{phantom}{i1,i2,o2,o1,i1}
    \fmf{plain}{i1,v}
    \fmf{plain}{i2,v}
    \fmf{plain}{o1,v}
    \fmf{plain}{v,o2}
    \fmfforce{nw}{i1}
    \fmfforce{sw}{i2}
    \fmfforce{se}{o1}
    \fmfforce{ne}{o2}
    \fmfforce{c}{v}
    \fmfblob{0.3w}{v}
  \end{fmfgraph*}
  }~}
\def\bigvertex[#1]{~\parbox{#1mm}{
  \begin{fmfgraph*}(#1,#1)
    \fmfleft{i1, i2}
    \fmfright{o1,o2}
    \fmf{phantom}{i1,i2,o2,o1,i1}
    \fmf{plain}{i1,v,i2}
    \fmf{plain}{o1,v,o2}
    \fmfforce{nw}{i1}
    \fmfforce{sw}{i2}
    \fmfforce{se}{o1}
    \fmfforce{ne}{o2}
    \fmfforce{c}{v}
    \fmfdot{v}
  \end{fmfgraph*}
  }~}
\def\edge[#1]{~\parbox{#1mm}{
  \begin{fmfgraph*}(#1,#1)
    \fmfleft{i}
    \fmfright{o}
    \fmf{plain,l.s=left}{i,o}
  \end{fmfgraph*}
}~}
\def\curlyedge[#1]{~\parbox{#1mm}{
  \begin{fmfgraph*}(#1,#1)
    \fmfleft{i}
    \fmfright{o}
    \fmf{curly,l.s=left}{i,o}
  \end{fmfgraph*}
}~}
\def\wavyedge[#1]{~\parbox{#1mm}{
  \begin{fmfgraph*}(#1,#1)
    \fmfleft{i}
    \fmfright{o}
    \fmf{wiggly,l.s=left}{i,o}
  \end{fmfgraph*}
}~}
\def\tadpole[#1]{~\parbox{#1mm}{
 	\begin{fmfgraph*}(#1,#1)
 		\fmfleft{i1}
 		\fmfright{o1}
		\fmf{plain}{i1,v1,v1,o1}
 		\fmfdot{v1}
 	\end{fmfgraph*}}~}
\def\amputatedtadpole[#1]{\begin{fmfgraph*}(#1,2)
		\fmfleft{i1}
		\fmfright{o1}
		\fmf{phantom}{i1,v1,o1}
		\fmf{plain}{v1,v1}
		\fmfdot{v1}
	\end{fmfgraph*}}
\def\amputatedwigglytadpole[#1]{\begin{fmfgraph*}(#1,2)
		\fmfleft{i1}
		\fmfright{o1}
		\fmf{phantom}{i1,v1,o1}
		\fmf{wiggly}{v1,v1}
		\fmfdot{v1}
	\end{fmfgraph*}}
\def\amputatedcurlytadpole[#1]{\begin{fmfgraph*}(#1,2)
		\fmfleft{i1}
		\fmfright{o1}
		\fmf{phantom}{i1,v1,o1}
		\fmf{curly}{v1,v1}
		\fmfdot{v1}
	\end{fmfgraph*}}
\def\vacfirstord[#1]{~\parbox{#1mm}{
 	\begin{fmfgraph*}(#1,#1)
		\fmfleft{i1,i2}
		\fmfright{o1,o2}
		\fmf{plain,left}{v2,v1}
		\fmf{plain,left}{v1,v2}
		\fmf{plain,left}{v3,v1}
		\fmf{plain,left}{v1,v3}
		\fmf{phantom}{v2,i1}
		\fmf{phantom}{v3,o1}
		\fmfforce{(0,0)}{i1}
		\fmfforce{(w,0)}{o1}
		\fmfforce{(0.5w,0.5h)}{v1}
		\fmfforce{(0,0.5h)}{v2}
		\fmfforce{(w,0.5h)}{v3}
		\fmfforce{nw}{i2}
		\fmfforce{ne}{o2}
		\fmfforce{(0,0.5h)}{i2}
		\fmfforce{(w,0.5h)}{o2}
		\fmfdot{v1}
		\end{fmfgraph*}}~}
\def\doubletadpolehor[#1]{~\parbox{#1mm}{
\begin{fmfgraph}(#1,#1)
	\fmfleft{i1}
	\fmfright{o1}
	\fmf{plain}{i1,v1,v1,v2,v2,o1}
	\fmfdot{v1,v2}
\end{fmfgraph}
}~}
\def\tripletadpolehor[#1]{~\parbox{#1mm}{
\begin{fmfgraph}(#1,#1)
	\fmfleft{i1}
	\fmfright{o1}
	\fmf{plain}{i1,v1,v1,v2,v2,v3,v3,o1}
	\fmfdot{v1,v2,v3}
\end{fmfgraph}
}~}
\def\doubletadpolever[#1]{~\parbox{#1mm}{
\begin{fmfgraph}(#1,#1)
	\fmfleft{i1}
	\fmfright{o1}
	\fmf{plain}{i1,v1}
	\fmf{plain,left}{v1,v2}
	\fmf{plain,left}{v2,v1}
	\fmf{plain}{v1,o1}
	\fmffreeze
	\fmf{plain,left}{v2,v3,v2}
	\fmfforce{c}{v2}
	\fmfforce{(0.5w,h)}{v3}
	\fmfforce{(0.5w,0)}{v1}
	\fmfforce{sw}{i1}
	\fmfforce{se}{o1}
	\fmfdot{v1,v2}
\end{fmfgraph}
}~}
\def\amputateddoubletadpolever[#1]{~\parbox{#1mm}{
	\begin{fmfgraph}(#1,#1)
	\fmfleft{i1}
	\fmfright{o1}
	\fmf{phantom}{i1,v1}
	\fmf{plain,left}{v1,v2}
	\fmf{plain,left}{v2,v1}
	\fmf{phantom}{v1,o1}
	\fmffreeze
	\fmf{plain,left}{v2,v3,v2}
	\fmfforce{c}{v2}
	\fmfforce{(0.5w,h)}{v3}
	\fmfforce{(0.5w,0)}{v1}
	\fmfforce{sw}{i1}
	\fmfforce{se}{o1}
	\fmfdot{v1,v2}
	\end{fmfgraph}~
}
}
\def\sunset[#1]{\parbox{#1mm}{
\begin{fmfgraph}(#1,#1)
\fmfleft{i}
\fmfright{o}
\fmfforce{0,0.5h}{i}
\fmfforce{w,0.5h}{o}
\fmf{plain,tension=5}{i,v1}
\fmf{plain,tension=5}{v2,o}
\fmf{plain,left,tension=0.5}{v1,v2,v1}
\fmf{plain}{v1,v2}
\fmfdot{v1,v2}
\end{fmfgraph}
}}
\def\amputatedsunset[#1]{\parbox{#1mm}{
\begin{fmfgraph}(#1,#1)
\fmfleft{i}
\fmfright{o}
\fmf{phantom,tension=5}{i,v1}
\fmf{phantom,tension=5}{v2,o}
\fmf{plain,left,tension=0.5}{v1,v2,v1}
\fmf{plain}{v1,v2}
\fmfdot{v1,v2}
\end{fmfgraph}
}}
\def\fourpointsecondorder[#1]{~\parbox{#1mm}{
	\begin{fmfgraph}(15,#1)
	\fmfleft{i1,i2}
	\fmfright{o1,o2}
	\fmf{plain}{i1,v1}
	\fmf{plain}{i2,v1}
	\fmf{plain,right}{v1,v2}
	\fmf{plain,left}{v1,v2}
	\fmf{plain}{v2,o1}
	\fmf{plain}{v2,o2}
	\fmfdot{v1,v2}
	\end{fmfgraph}}~
}
\def\fourpointsecondordertwo[#1]{~\parbox{#1mm}{
	\begin{fmfgraph}(15,#1)
	\fmfleft{i1,i2,i3}
	\fmfright{o1}
	\fmf{plain}{i1,v1}
	\fmf{plain}{i2,v1}
	\fmf{plain}{i3,v1}
	\fmf{plain}{v1,v2,v2,o1}
	\fmfdot{v1,v2}
	\end{fmfgraph}
}~}
\newtheorem{theorem}{Theorem}[section]
\theoremstyle{definition}
\newtheorem{definition}[theorem]{Definition}
\newtheorem{convention}[theorem]{Convention}
\newtheorem{conjecture}[theorem]{Conjecture}
\newtheorem{notation}[theorem]{Notation}
\newtheorem{lemma}[theorem]{Lemma}
\newtheorem{proposition}[theorem]{Proposition}
\theoremstyle{remark}
\newtheorem{remark}[theorem]{Remark}
\theoremstyle{example}
\newtheorem{corollary}[theorem]{Corollary}
\title{Euler and Betti curves are stable under Wasserstein deformations of distributions of stochastic processes}
\author[1,2,3]{Daniel Perez\thanks{Email: \texttt{daniel.perez@ens.fr}}}
\affil[1]{\footnotesize D\'epartement de math\'ematiques et applications, \'Ecole normale sup\'erieure, CNRS, PSL University, 75005 Paris, France}
\affil[2]{\footnotesize Laboratoire de math\'ematiques d'Orsay, Universit\'e Paris-Saclay, CNRS, 91405 Orsay, France}
\affil[3]{\footnotesize DataShape, Centre Inria Saclay, 91120 Palaiseau, France}
\date{\today}
\begin{document}
\maketitle

\begin{abstract}
Euler and Betti curves of stochastic processes defined on a $d$-dimensional compact Riemannian manifold which are almost surely in a Sobolev space $W^{n,s}(X,\R)$ (with $d<n$) are stable under perturbations of the distributions of said processes in a Wasserstein metric. Moreover, Wasserstein stability is shown to hold for all $p>\frac{d}{n}$ for persistence diagrams stemming from functions in $W^{n,s}(X,\R)$.
\end{abstract}
\tableofcontents

\section{Introduction}
The study of random fields by Adler and Taylor using geometric and topological methods gave rise to the following celebrated and interesting result.
\begin{theorem}[Adler, Taylor, Theorem 12.4.1 \cite{AdlerTaylor:RandomFields}]
Let $f$ be a centered unit variance Gaussian field on a $d$-dimensional $C^2$-submanifold $X$ of a $C^3$ manifold. Suppose $f$ satisfies the conditions of \cite[Corollary 11.3.5]{AdlerTaylor:RandomFields} (henceforth the GKF conditions), then,
\be
\expect{\chi(\{f \geq x\})} = \sum_{j=0}^d \Lag_j(X) \, \rho_j(x) \, \,,
\ee 
where $\chi$ denotes the Euler characteristic and $\rho_j(x)$ denotes some known functions of $x$ and $\Lag_j$ denote the Lipschitz-Killing curvatures of $X$.
\end{theorem}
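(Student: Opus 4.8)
The plan is to follow the Adler--Taylor route through three stages: a Morse-theoretic point-process representation of $\chi(A_u)$, the Kac--Rice metatheorem, and an explicit Gaussian evaluation that reorganises the resulting integral into Lipschitz--Killing curvatures weighted by level densities. Throughout, write $A_u = \{x \in X : f(x) \ge u\}$ for the excursion set. Under the GKF conditions $f$ is almost surely a Morse function on $X$ with no critical points on any level set $\{f = u\}$, so classical Morse theory (for manifolds with corners, to accommodate $\partial X$) gives the deterministic identity
\[
\chi(A_u) \;=\; \sum_{i=0}^d (-1)^i\, \nu_i(u),
\]
where $\nu_i(u)$ counts the critical points $x$ of $f|_X$ with $f(x)\ge u$ at which $-\Hess f(x)$ has index $i$; when $\partial X \ne \emptyset$ one adds the analogous counts over each boundary stratum, with an extra constraint on the inward normal derivative.

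First I would take expectations termwise and apply Kac--Rice. Since a critical point is a zero of the Riemannian gradient $\nabla f$, for each $i$
\[
\expect{\nu_i(u)} = \int_X \cexpect{\,|\det \Hess f|\,\mathbf{1}\{f\ge u\}\,\mathbf{1}\{\Ind(-\Hess f)=i\}}{\nabla f = 0}\; p_{\nabla f(x)}(0)\; dV(x).
\]
The key algebraic simplification is that the alternating sum collapses the modulus of the determinant into a signed determinant: for a nondegenerate symmetric matrix $H$ one has $\sum_i (-1)^i \mathbf{1}\{\Ind(-H)=i\}\,|\det H| = \sgn(\det(-H))\,|\det(-H)| = \det(-H)$, whence
\[
\expect{\chi(A_u)} = \int_X \cexpect{\det(-\Hess f)\,\mathbf{1}\{f\ge u\}}{\nabla f = 0}\; p_{\nabla f(x)}(0)\; dV(x).
\]

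Next the Gaussian structure is exploited. At each $x$ the triple $(f,\nabla f,\Hess f)$ is jointly Gaussian, and it is natural to work in the metric $g(Y,Z) = \expect{(Yf)(Zf)}$ induced by the field, under which $f$ has unit variance and the components of $\nabla f$ are i.i.d.\ standard normal and independent of $f$. Conditioning on $\nabla f = 0$, the Hessian $\Hess f$ becomes, in a $g$-orthonormal frame, a Gaussian Orthogonal Ensemble matrix shifted by a multiple of the identity depending linearly on $f$ and on the curvature of $g$. The conditional expectation of $\det(-\Hess f)$ against $\{f\ge u\}$ then factorises, degree by degree, into (i) a curvature polynomial whose integral over $X$ is, by the generalised Gauss--Bonnet theorem and Weyl's tube formula, exactly $\Lag_j(X)$, and (ii) a one-dimensional Gaussian integral producing $\rho_j(u) = (2\pi)^{-(j+1)/2} e^{-u^2/2} H_{j-1}(u)$ for $j \ge 1$ (with $\rho_0$ the Gaussian tail and $H_{j-1}$ the Hermite polynomials). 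Grouping by $j$ yields the stated formula.

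The hard part will be this last evaluation. Concretely one must (a) verify that after conditioning on $\nabla f = 0$ the Hessian really has the claimed shifted Gaussian-Orthogonal-Ensemble law in the $g$-induced frame, which rests on computing the full covariance of $(f,\nabla f,\Hess f)$ and on the Gaussian regression formula; (b) evaluate $\expect{\det(\mathrm{GOE} - t\,\mathrm{Id})}$ and recognise it as a Hermite polynomial in $t$; and (c) match the polynomial in the curvature of $g$ that emerges from the Gaussian moment computation with the Lipschitz--Killing integrand, carrying the same bookkeeping through every boundary stratum of $X$ by means of the tube formula. The measure-theoretic preliminaries---almost-sure Morse-ness, finiteness of the expected critical-point counts, and the validity of the Kac--Rice formula and of the interchange of expectation and summation---are comparatively routine and follow from the non-degeneracy and regularity guaranteed by the GKF conditions.
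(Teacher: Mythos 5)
This theorem is quoted verbatim from Adler and Taylor (their Theorem 12.4.1) and the paper supplies no proof of its own, so there is no internal argument to compare against; your proposal can only be judged against the cited source. Judged that way, it is a faithful outline of exactly the route Adler--Taylor take: Morse-theoretic critical-point counting on the (stratified) manifold, the Kac--Rice metatheorem, the collapse of the alternating index sum into the signed determinant $\det(-\Hess f)$, and the conditional Gaussian computation in the field-induced metric that produces Hermite-polynomial level densities $\rho_j$ and, via the tube formula, the Lipschitz--Killing curvatures $\Lag_j(X)$. The only caveat is that your steps (a)--(c) --- the conditional law of the Hessian, the GOE determinant evaluation, and the matching of curvature integrands across boundary strata --- are precisely where the substance of Adler--Taylor's Chapters 11--12 lies, so what you have is a correct proof plan rather than a proof; nothing in it is wrong, but the heavy computations are named rather than executed.
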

In recent years, much work has been done to weaken the Gaussian assumption of the above theorem. A notable step in this direction is the following remarkable result by Bobrowski and Borman.
\begin{theorem}[Bobrowski, Borman, Theorem 4.1, \cite{Bobrowski_2012}]
Let $X$ be a compact d-dimensional stratified space, and let $f : X \to \R^k$ be a $k$-dimensional Gaussian random field satisfying the GKF
conditions. For a piecewise $C^2$-function $G : \R^k \to \R$ and let $g=G \circ f$. Setting $D_u = G^{-1}\,]-\infty,u]$, we have
\be
\expect{\int_X g \,d\chi } = \chi(X)\expect{g} - \sum_{j=1}^d (2\pi)^{j/2} \Lag_j(X) \int_{\R} \mathcal{M}_j(D_u)\;du \,,
\ee
where $\expect{g}$ has constant mean.
\end{theorem}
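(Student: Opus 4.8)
The plan is to reduce the statement to the Gaussian Kinematic Formula (GKF)---the multivariate extension of the Adler--Taylor result quoted above---by writing the Euler integral $\int_X g\,d\chi$ as a ``layer-cake'' integral over the super-level sets of $g$, taking the expectation inside, and recognising the GKF level by level. First I would record the integral-geometric identity
\be
\int_X g\,d\chi = \int_0^\infty \chi(\{g\geq u\})\,du - \int_{-\infty}^0\big(\chi(X)-\chi(\{g\geq u\})\big)\,du ,
\ee
valid whenever $g$ is tame enough that every super-level set has a well-defined finite Euler characteristic. The GKF hypotheses guarantee that $f$, and hence $g=G\circ f$ for piecewise-$C^2$ $G$, is almost surely of Morse type, so this holds a.s. Since $\{g\geq u\}=f^{-1}(A_u)$ is an excursion set of the field $f$ over $A_u:=\{G\geq u\}\subset\R^k$, the right-hand side is assembled precisely from the objects the GKF computes.

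Next I would take expectations, interchange $\expect{\,\cdot\,}$ with the $u$-integral, and apply the GKF to each term,
\be
\expect{\chi(\{g\geq u\})}=\sum_{j=0}^d (2\pi)^{-j/2}\,\Lag_j(X)\,\mathcal{M}_j(A_u),
\ee
where $\mathcal{M}_j$ are the Gaussian Minkowski functionals of $A_u$. The $j=0$ term isolates cleanly: since $\mathcal{M}_0(A_u)=\gamma_k(A_u)=\PP\big(G(Z)\geq u\big)$ for $Z$ a standard $k$-dimensional Gaussian and $\Lag_0(X)=\chi(X)$, the ordinary layer-cake formula for the mean of the scalar random variable $G(Z)$ collapses the two $u$-integrals into $\chi(X)\expect{g}$, which is the first term of the claim and is constant because each $f(x)$ has the same law.

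The remaining $j\geq 1$ terms combine the $\int_0^\infty$ and $\int_{-\infty}^0$ curvature contributions additively into a single integral $\sum_{j\geq1}(2\pi)^{-j/2}\Lag_j(X)\int_\R \mathcal{M}_j(A_u)\,du$ over $\R$. I would then rewrite the super-level functionals $\mathcal{M}_j(A_u)$ in terms of the sublevel sets $D_u=\{G\leq u\}=\overline{A_u^{\,c}}$ via the complementation relation for Gaussian Minkowski functionals; this is the step that produces the overall minus sign and the normalisation converting $(2\pi)^{-j/2}$ into the stated $(2\pi)^{j/2}$, yielding $-\sum_{j=1}^d(2\pi)^{j/2}\Lag_j(X)\int_\R\mathcal{M}_j(D_u)\,du$.

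The hard part will be the analytic justification rather than the bookkeeping. One must verify that $u\mapsto\expect{\lvert\chi(\{g\geq u\})\rvert}$ is integrable, both to license the Fubini interchange and to guarantee that the $u$-integrals converge; this needs quantitative control of the tails, namely that $\mathcal{M}_j(A_u)$ decays at Gaussian speed as $\lvert u\rvert\to\infty$, which follows from the Gaussian weight built into the functionals together with the piecewise-$C^2$ regularity of $G$. A secondary technical point is to establish the layer-cake identity for $\int_X g\,d\chi$ on the almost-sure event that $g$ is Morse, and to check that the exceptional critical values---where $\{g=u\}$ is singular---form a Lebesgue-null set of $u$ and therefore do not affect any of the integrals.
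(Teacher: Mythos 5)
The paper you are quoting from does not actually prove this statement --- it is cited verbatim from Bobrowski--Borman --- so your attempt can only be judged against their original argument. Your scaffolding is the right one: the layer-cake expression for $\int_X g\,d\chi$, Fubini, the GKF applied level by level, and the identification of the $j=0$ term (using $\mathcal{M}_0(A_u)=\gamma_k(A_u)=\PP(G(Z)\geq u)$ and $\Lag_0(X)=\chi(X)$) with $\chi(X)\expect{g}$ are all exactly as in the source, and your attention to the Fubini/integrability issues is appropriate.

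The genuine gap is your final ``complementation'' step. Your superlevel-set layer cake computes the \emph{lower} Euler integral and, after applying the GKF to $A_u=\{G\geq u\}$, yields $\expect{\int_X g\,d\chi}=\chi(X)\expect{g}+\sum_{j\geq 1}(2\pi)^{-j/2}\Lag_j(X)\int_\R\mathcal{M}_j(A_u)\,du$ with a \emph{plus} sign. There is no complementation identity that converts this into $-\sum_{j\geq1}(2\pi)^{-j/2}\Lag_j(X)\int_\R\mathcal{M}_j(D_u)\,du$: reversing the outward normal of the boundary flips the sign of the shape operator, so the true pointwise relation is $\mathcal{M}_j(D_u)=(-1)^{j-1}\mathcal{M}_j(A_u)$, alternating in $j$ rather than uniformly negative. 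Concretely, take $k=1$ and $G=\mathrm{id}$: then $\mathcal{M}_1(A_u)=\mathcal{M}_1(D_u)=\varphi(u)$, the standard Gaussian density, so $\int_\R\mathcal{M}_1(A_u)\,du=\int_\R\mathcal{M}_1(D_u)\,du=1$, and your claimed relation would force $1=-1$. (A sanity check on $X=S^1$ confirms the plus sign for your convention: $\int_{S^1}f\,d\chi=\sum_{\text{maxima}}f(p)-\sum_{\text{minima}}f(q)$ for the superlevel convention, which has strictly positive expectation for a stationary Gaussian field.) The minus sign and the appearance of the sublevel sets $D_u$ in the statement do not come from complementing Minkowski functionals; they come from using the dual convention for the Euler integral from the start, namely $\int_X g\,d\chi=\int_0^\infty\left[\chi(X)-\chi(\{g\leq u\})\right]du-\int_{-\infty}^0\chi(\{g\leq u\})\,du$, applying the GKF directly to the closed sets $f^{-1}(D_u)$; the minus sign then falls out of the layer cake itself. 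The two conventions are exchanged by the duality $\int g\,\lceil d\chi\rceil=-\int(-g)\,\lfloor d\chi\rfloor$ but give genuinely different numbers for a fixed $g$, so as written your argument proves a correct formula that is not the quoted one. Finally, note that no manipulation can turn $(2\pi)^{-j/2}$ into the quoted $(2\pi)^{j/2}$; that exponent is a transcription error in the quoting paper, and the fact that your complementation step was engineered to ``produce'' it is itself a sign that the step is not a real identity.
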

As pointed out by the authors of the above theorem in their original paper \cite{Bobrowski_2012}, the difficulty in evaluating the expression above lies in computing the Minkowski functionals $\mathcal{M}_j(D_u)$. In this paper, we aim to somewhat compensate this difficulty by providing a stability result regarding the above expectations, and moreover, the distributions of the Euler curves (\textit{i.e.} the curves $x \mapsto \chi(\{f\geq x\}))$. More precisely, our main result is the following.
\begin{theorem}
Let $X$ be a compact $d$-dimensional manifold and let $(\Omega, \mathcal{F},\PP)$ be a probability space. Consider two stochastic processes $f,g: \Omega \to W^{n,s}(X,\R)$ with $d<n$. Then, the Euler curves of $f$ and $g$ are well-defined as elements of $L^1(\R)$ and as distributions, and for every $\frac{d}{n} < \al <1$, there exists a finite constant $C_{X,n,s}$ such that
\be
\norm{\expect{\chi(\{f \geq x\})}-\expect{\chi(\{g \geq x\})}}_{L^1_x(\R)} \leq C_{X,n,s} \left[\norm{f}_{L^1(\Omega,W^{n,s})}^\al+\norm{g}_{L^1(\Omega,W^{n,s})}^\al\right] W_{1,L^\infty}^{1-\al}(f_\sharp \PP, g_\sharp \PP) \,,
\ee
where $W_{1,L^\infty}$ denotes the Wasserstein distance on the space of probability measures on the space $(L^\infty(X,\R) \cap W^{n,s}(X,\R), \norm{\cdot}_{L^\infty})$. Moreover, the same statement holds for all Betti curves.
\end{theorem}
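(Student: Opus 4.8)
The plan is to split the argument into a deterministic, pathwise estimate on the Euler (and Betti) curves of two fixed functions $h,h'\in W^{n,s}(X,\R)\cap L^\infty(X,\R)$, followed by a probabilistic step in which the $L^\infty$-optimal coupling of $f_\sharp\PP$ and $g_\sharp\PP$ is fed into this estimate. Concretely, the core is the pathwise interpolation inequality
\[
\norm{\chi_h-\chi_{h'}}_{L^1(\R)}\;\leq\; C_{X,n,s}\,\bigl(\norm{h}_{W^{n,s}}^{\al}+\norm{h'}_{W^{n,s}}^{\al}\bigr)\,\norm{h-h'}_{L^\infty}^{1-\al},
\]
where $\chi_h(x)=\chi(\{h\geq x\})$; everything else is soft. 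Before anything else I would record that, since $n>d$, Morrey's embedding gives $W^{n,s}(X,\R)\hookrightarrow C^0(X,\R)$, so the superlevel sets are closed and their superlevel-set persistence modules are $q$-tame; the diagrams $\dgm_k(h)$ are therefore well defined, and the Betti curves $\beta_k^h(x)=\dim H_k(\{h\geq x\})$ and the Euler curve $\chi_h=\sum_k(-1)^k\beta_k^h$ are integer-valued $BV$ functions equal to $\dim H_k(X)$ (resp.\ $\chi(X)$) below $\min h$ and to $0$ above $\max h$. Discarding the common essential bars (equivalently, passing to the difference of two such curves) yields compactly supported, hence $L^1$, functions, whose distributional derivatives are the finite signed measures $\sum_k(-1)^k\,d\beta_k^h$ supported on the homological critical values; this settles well-definedness both in $L^1(\R)$ and as distributions.

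For the pathwise inequality I would pass through persistence. Writing the bars of $\dgm_k(h)$ as intervals $I$ and matching them to the bars $J$ of $\dgm_k(h')$ (allowing matches to the diagonal), the identity $\beta_k^h-\beta_k^{h'}=\sum_{(I,J)}(\mathbf{1}_I-\mathbf{1}_J)$ and $\int_\R|\mathbf{1}_I-\mathbf{1}_J|=|I\triangle J|\leq\norm{p_I-p_J}_{\ell^1}$ give $\norm{\beta_k^h-\beta_k^{h'}}_{L^1}\leq W_1^{\ell^1}(\dgm_k(h),\dgm_k(h'))$, the $1$-Wasserstein cost of the matching. I would then interpolate: for a bottleneck-optimal matching every matched pair moves by at most $\delta:=\norm{h-h'}_{L^\infty}$ in $\ell^\infty$ (Cohen--Steiner--Edelsbrunner--Harer stability), so writing $\norm{p_I-p_J}_{\ell^1}=\norm{p_I-p_J}_{\ell^1}^{1-\al}\norm{p_I-p_J}_{\ell^1}^{\al}$ and using $\norm{p_I-p_J}_{\ell^1}\leq 2\delta$ in the first factor and subadditivity of $t\mapsto t^{\al}$ in the second, summation over pairs yields $\norm{\beta_k^h-\beta_k^{h'}}_{L^1}\lesssim\delta^{1-\al}\bigl(\Pers_\al(h)+\Pers_\al(h')\bigr)$, with $\Pers_\al$ the degree-$\al$ total persistence. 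The Sobolev total-persistence bound $\Pers_\al(h)\leq C_{X,n,s}\norm{h}_{W^{n,s}}^{\al}$, valid precisely for $\al>d/n$ (the persistence-diagram stability of the paper), closes this step for each of the finitely many degrees $k\in\{0,\dots,d\}$; summing over $k$ via the triangle inequality upgrades the Betti estimates to the Euler estimate.

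For the probabilistic step I would first check that $(\omega,x)\mapsto\chi(\{f(\omega)\geq x\})$ is jointly measurable (upper semicontinuity of $h\mapsto\dim H_k(\{h\geq x\})$ under $C^0$ perturbation, plus measurability of $\omega\mapsto f(\omega)$ into $C^0$), so that $\expect{\chi_f(x)}=\int\chi_h(x)\,d(f_\sharp\PP)(h)$ by the transfer formula and Fubini applies. Let $\pi$ be an optimal coupling for $W_{1,L^\infty}(f_\sharp\PP,g_\sharp\PP)$. Then $\expect{\chi_f}-\expect{\chi_g}=\int(\chi_h-\chi_{h'})\,d\pi(h,h')$ as $L^1(\R)$-valued Bochner integrals, and Minkowski's integral inequality gives $\norm{\expect{\chi_f}-\expect{\chi_g}}_{L^1(\R)}\leq\int\norm{\chi_h-\chi_{h'}}_{L^1(\R)}\,d\pi$. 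Inserting the pathwise inequality and applying Hölder's inequality in $\pi$ with conjugate exponents $1/\al$ and $1/(1-\al)$, together with the marginal identities $\int\norm{h}\,d\pi=\norm{f}_{L^1(\Omega,W^{n,s})}$, $\int\norm{h'}\,d\pi=\norm{g}_{L^1(\Omega,W^{n,s})}$ and $\int\norm{h-h'}_{L^\infty}\,d\pi=W_{1,L^\infty}(f_\sharp\PP,g_\sharp\PP)$, yields exactly the claimed bound; the Betti-curve version follows identically from the degree-$k$ pathwise estimate.

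The main obstacle is twofold. Analytically, the heart is the Sobolev total-persistence estimate $\Pers_\al\lesssim\norm{\cdot}_{W^{n,s}}^{\al}$ for $\al>d/n$, which is where the regularity index $n>d$ and the threshold $d/n$ genuinely enter and which must be imported from the persistence part of the paper. Structurally, the subtlety is that one cannot apply Kantorovich--Rubinstein duality pointwise in $x$, since $h\mapsto\chi_h(x)$ is a discontinuous (step) functional on $(L^\infty,\norm{\cdot}_{L^\infty})$ and hence not Lipschitz; the whole scheme is arranged so that duality is never invoked pointwise, the $L^\infty$-continuity being recovered only after integrating the curve against $dx$, i.e.\ at the level of the $L^1(\R)$-valued map $h\mapsto\chi_h$, whose modulus of continuity is governed by total persistence.
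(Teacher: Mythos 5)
Your proposal is correct in substance and is built from exactly the same three ingredients as the paper's own argument: the Sobolev total-persistence bound $\Pers_\al^\al(\Dgm_k(h)) \leq C_{X,n,s}\norm{h}_{W^{n,s}}^\al$ for $\al > d/n$ (the paper's Corollary \ref{cor:FinitePersp}, with the $\dim H_k(X)$ term absorbed via the Sobolev embedding into $L^\infty$), the CSEHM-style interpolation between bottleneck stability and total persistence, and a coupling-plus-H\"older argument for the probabilistic step. The organization, however, is genuinely different. The paper is modular: it first proves $d_p$-stability of diagrams ($d_p^p(\Dgm_k(f),\Dgm_k(g)) \leq C(\norm{f}_{W^{n,s}}^q \vee \norm{g}_{W^{n,s}}^q)\norm{f-g}_\infty^{p-q}$), upgrades this to a bound on $W_{p,d_p}$ between the laws of the diagrams (Theorem 2.7), separately shows that $\beta_k : \DD_1 \to L^1(\R)$ is a $2$-Lipschitz representation (Corollary 3.7), and concludes by the abstract stability theorem for H\"older representations with $q=\al$, $p=1$, $s=\tfrac{1}{1-\al}$, $r=\tfrac{1}{\al}$. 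You collapse this: the Lipschitz property of $\beta_k$ and the diagram stability are merged into a single pathwise inequality $\norm{\chi_h-\chi_{h'}}_{L^1} \lesssim (\norm{h}_{W^{n,s}}^\al + \norm{h'}_{W^{n,s}}^\al)\norm{h-h'}_\infty^{1-\al}$ via bar matching, and the two stochastic theorems are merged into one Minkowski-plus-H\"older computation against an optimal coupling. Your route is more elementary and self-contained (it never invokes persistence measures, the partial transport metrics $d_p$, or Wasserstein distances on diagram space); the paper's route yields the stronger intermediate statement $W_{1,L^1}(\beta_k(f)_\sharp\PP,\beta_k(g)_\sharp\PP) \leq \cdots$, of which the expectation bound is a corollary, together with reusable general theorems.

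Two points need tightening. First, in the interpolation step, bottleneck stability alone does not give $\sum_{(I,J)} \norm{p_I-p_J}_{\ell^1}^\al \lesssim \Pers_\al^\al(\Dgm_k(h)) + \Pers_\al^\al(\Dgm_k(h'))$: a bottleneck-optimal matching may still match two short bars across a distance comparable to $\delta$ (any cost below the bottleneck value is permitted), in which case subadditivity of $t \mapsto t^\al$ has nothing to act on, and the claimed sum can be much larger than the total persistences. You must first modify the matching so that every matched pair satisfies $\norm{p_I - p_J}_{\infty} \leq d_{\R^2,\infty}(p_I,\Delta)+d_{\R^2,\infty}(p_J,\Delta)$, by rerouting any violating pair through the diagonal (this does not increase the bottleneck cost); only then does $\norm{p_I-p_J}^\al \lesssim \ell(I)^\al + \ell(J)^\al$ hold pairwise. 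This is precisely the set-$S$ argument in the paper's Proposition \ref{prop:interpolationOT}, and it is also where the cited Cohen--Steiner--Edelsbrunner--Harer--Mileyko proof does its work, so the fix is standard --- but as literally written your chain of inequalities is false. Second, your well-definedness discussion overstates the regularity of the curves: $\beta_k(h)$ need not be of bounded variation (there may be infinitely many bars, as the paper's example $e^{-1/x^2}\sin(1/x)$ shows) and need not be finite pointwise (the paper remarks that $\beta_k(f)(x)$ can be infinite for some $x$). What is true, and all you need, is $\norm{\beta_k(h)}_{L^1} = \Pers_1(\Dgm_k(h)) < \infty$, valid since $1 > d/n$, together with $L^1(\R) \subset \DD'(\R)$. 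Neither point affects the validity of your final bound once repaired.
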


The key will be to adopt the point of view of persistent homology, for which we refer the unfamiliar reader to the now classical references on the subject for an introduction \cite{Chazal:Persistence, Oudot:Persistence}. The relevance of the study of persistence diagrams was already understood by Bobrowski and Borman, and was later also elaborated upon by different authors \cite{Adler_2010, Lebovici_2021}. 

Parallel to this development, the following result was recently shown by Buhovsky \textit{et al.}
\begin{theorem}[Buhovsky, Payette, Polterovich, Polterovich, Shelukhin, Stojisavljevi\'c, \cite{Polterovich_2022}]
\label{thm:Buhovsky}
Let $X$ be a compact Riemannian manifold of dimension $d$ and let $f \in W^{n,s}(X,\R)$ and $n>\frac{d}{s}$. Then, $f$ is continuous and for all $\veps >0$ 
\be
N^\veps_{H_k(X,f)} \leq C_{X,n,s} \norm{f}_{W^{n,s}}^{d/n} \;\veps^{-d/n} + \dim H_k(X) \,,
\ee 
where $N^\veps_{H_k(X,f)}$ denotes the number of bars of length $\geq \veps$ in the barcode of the persistent homology of degree $k$, $H_k(X,f)$.
\end{theorem}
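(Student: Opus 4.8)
\emph{Continuity and reduction.} To prove Theorem~\ref{thm:Buhovsky}, first note that since $ns>d$ the Sobolev--Morrey embedding gives $W^{n,s}(X,\R)\hookrightarrow C^{0}(X,\R)$ with $\norm{f}_{C^0}\lesssim \norm{f}_{W^{n,s}}$, so every representative of $f$ is continuous and, $X$ being compact, the superlevel-set filtration is tame and the barcode of $H_k(X,f)$ is well defined. Among its bars, exactly $\dim H_k(X)$ are infinite (the classes surviving to $H_k(X)$); these have length $\geq\veps$ for every $\veps$ and account precisely for the additive term, so it suffices to bound the number $N$ of \emph{finite} bars of length $\geq\veps$. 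I would also reduce at the outset to a smooth Morse function: mollifying $f$ at a small scale produces $\tilde f$ with $\norm{\tilde f}_{W^{n,s}}\lesssim\norm{f}_{W^{n,s}}$ and $\norm{f-\tilde f}_{C^0}$ as small as desired, and bottleneck stability of persistence then matches every length-$\geq\veps$ bar of $f$ to a length-$\geq\veps/2$ bar of $\tilde f$; a further generic perturbation makes $\tilde f$ Morse at negligible cost.

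\emph{The analytic engine.} The exponent $d/n$ should come from a competition between a packing constraint and a Sobolev-mass constraint optimised over the spatial scale of a feature. The basic local input is scale-invariant: rescaling $y\mapsto u(x+ry)$ shows $\norm{D^n u}_{L^s(B(x,r))}^s = r^{\,ns-d}\norm{D^n(\cdot)}_{L^s(B(0,1))}^s$, so the Sobolev inequality on $W^{n,s}_0(B(0,1))$ (valid since $ns>d$) yields, for any $u$ with compact support in a ball,
\be
\supp u\subset B(x,r),\ \ \norm{u}_{L^\infty}\geq\veps \quad\Longrightarrow\quad \norm{D^n u}_{L^s(B(x,r))}\;\gtrsim\;\veps\,r^{\,d/s-n}\,.
\ee
Suppose one can attach to the $N$ finite bars a family of balls $B(x_i,r_i)$ of \emph{bounded overlap}, together with localised $\veps$-bumps $u_i$ supported in $B(x_i,r_i)$ and satisfying $\norm{D^n u_i}_{L^s}\lesssim\norm{D^n f}_{L^s(B(x_i,r_i))}$. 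Summing the displayed lower bound and using bounded overlap then gives the two inequalities $\sum_i r_i^{\,d}\lesssim\Vol(X)$ and $\sum_i\veps^{\,s}r_i^{\,d-ns}\lesssim\norm{f}_{W^{n,s}}^{s}$. Writing $a_i=r_i^{\,d}$ and $\beta=1-ns/d<0$, the map $t\mapsto t^{\beta}$ is convex, so under $\sum_i a_i\lesssim\Vol(X)$ the second sum is minimised for a fixed number of terms by equal radii; this forces $N^{\,ns/d}\lesssim(\norm{f}_{W^{n,s}}/\veps)^{s}$, that is $N\lesssim\norm{f}_{W^{n,s}}^{d/n}\veps^{-d/n}$, exactly the claimed law. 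It is instructive that a cruder single-scale argument --- $C^0$-approximate $f$ by a piecewise-linear function on a mesh of size $\delta$, invoke persistence stability, and bound the bars by the number of simplices $\lesssim\delta^{-d}$ --- only produces the weaker exponent $d/(n-d/s)$; the improvement to $d/n$ comes precisely from letting each feature select its own scale and optimising through convexity.

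\emph{The main obstacle.} The hard step is manufacturing the bounded-overlap balls and bumps of the previous paragraph from the barcode. In degree $0$ this is essentially the theory of prominence: a finite bar of length $\geq\veps$ is born at a local maximum $x_i$ with value $v_i$ and dies at a saddle of value $v_i-\ell_i$, $\ell_i\geq\veps$; taking $B_i$ to be the connected component of $\{f>v_i-\veps\}$ containing $x_i$ just before it merges and setting $u_i=\min\!\big((f-(v_i-\veps))_+,\veps\big)$ on $B_i$ (extended by zero) gives disjoint balls carrying genuine $\veps$-bumps. For $k\geq1$ a persistent cycle need not be localised, and one must instead choose for each long bar a representative cycle together with a chain witnessing its death across a value-window of length $\geq\veps$, localise it along the gradient flow of the Morse function $\tilde f$, and --- this is the delicate point --- show that the resulting regions can be arranged with \emph{uniformly bounded} overlap rather than merely being finite in number. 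A second technical nuisance, even in degree $0$, is that the truncation defining $u_i$ interacts badly with $W^{n,s}$ for $n\geq2$, where the chain rule spawns products of lower-order derivatives; controlling $\norm{D^n u_i}_{L^s}$ by $\norm{D^n f}_{L^s(B_i)}$ then requires Gagliardo--Nirenberg product estimates and is responsible for the dependence of the constant on $X,n,s$. Once these localisation and overlap statements are in hand, the convexity optimisation of the second paragraph together with the immortal-bar count closes the proof.
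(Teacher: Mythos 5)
You should first be aware that the paper you are reading contains no proof of this statement: Theorem~\ref{thm:Buhovsky} is imported as a black box from \cite{Polterovich_2022}, so the only meaningful comparison is with the argument given there. Measured against that (or on its own terms), your proposal is not a proof: the step you defer --- attaching to the $N$ long finite bars balls $B(x_i,r_i)$ of bounded overlap carrying bumps $u_i$ with $\norm{D^n u_i}_{L^s}\lesssim \norm{D^n f}_{L^s(B(x_i,r_i))}$ --- is not a ``main obstacle'' left to the reader, it is the entire content of the theorem. For $k\geq 1$, localizing persistent cycles with uniformly bounded overlap is a well-known obstruction with no elementary resolution, and nothing in your sketch addresses it. Even for $k=0$ the construction fails as written: (i) the truncation $u_i=\min((f-(v_i-\veps))_+,\veps)$ is merely Lipschitz, so for $n\geq 2$ its distributional $n$-th derivative has singular parts supported on the truncation level sets; it is not an $L^s$ function at all, and no Gagliardo--Nirenberg product estimate can bound an object that does not exist --- the difficulty is kinks, not products. (ii) Disjointness of the superlevel components (which is the genuine content of the prominence argument) does not produce bounded overlap of enclosing balls: disjoint components can be long, thin and interleaved, so their bounding balls can all have diameter comparable to $\mathrm{diam}(X)$, destroying the packing constraint $\sum_i r_i^d\lesssim \Vol(X)$. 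In the H\"older category one repairs this because the pointwise modulus of continuity gives each component an inscribed ball of uniform radius $\sim(\veps/[f]_{C^\al})^{1/\al}$ --- this is exactly the mechanism in \cite{Perez_2020} --- but in the Sobolev category there is no pointwise modulus except through the embedding $W^{n,s}\hookrightarrow C^{0,n-d/s}$, which yields precisely the weaker exponent $d/(n-d/s)$ that you yourself discard. A further structural problem: $\veps$-oscillation on a ball of radius $r$ is \emph{not} bounded below in Sobolev cost by $\veps\, r^{d/s-n}$, because a degree-$(n-1)$ polynomial has vanishing $D^n$ yet nonzero oscillation; so your ``analytic engine'' inequality cannot be applied to pieces of $f$ itself, only to genuinely compactly supported bumps, which is again the localization you cannot perform.

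The proof in \cite{Polterovich_2022} sidesteps cycle localization altogether, and the way it does so explains the exponent. In outline: by bottleneck stability, $N^\veps_{H_k(X,f)}$ is bounded by the \emph{total} number of bars of any $g$ with $\norm{f-g}_\infty<\veps/2$; one constructs such a $g$ which is piecewise polynomial of degree $<n$ on an adaptive, stopping-time subdivision of $X$ into $\lesssim (\norm{f}_{W^{n,s}}/\veps)^{d/n}$ cells, the cell size being chosen so that the local Taylor (Bramble--Hilbert) approximation error, controlled by the local $L^s$ mass of $D^n f$, drops below $\veps/2$; finally, the total barcode size of a piecewise polynomial function is bounded linearly in the number of cells by Milnor--Thom type bounds on Betti numbers of semialgebraic sets. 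The multiscale optimization that you correctly identify as the source of the exponent $d/n$ is carried out there on a \emph{partition of the domain}, where bounded overlap is automatic, rather than per homology class, where it is unobtainable; and the oscillation created by the polynomial parts --- invisible to $D^n$ --- is accounted for by algebraic geometry rather than by Sobolev capacity. Your heuristic predicts the right answer, but the proof strategy stalls exactly where the published argument had to change tools.
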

A similar bound was shown in \cite{Perez_2020} for $\al$-H\"older functions and for a wider class of metric spaces. The results of this paper can be seen as direct consequences of the theory developped in \cite{Perez_2020}, but which deserve to be made explicit.

\begin{remark}
Using the Sobolev injection $W^{n,s} \xhookrightarrow{} C^{r+\al}$, for $r+\al = n- \frac{d}{s}$, we can rewrite the exponent as $\frac{d}{r+\al + \frac{d}{s}}$. In particular, when $s= \infty$ and $r=0$, we retrieve the same exponent as in \cite{Perez_2020}.
\end{remark}

\subsection{Glossary, definitions and conventions}
\begin{notation}
We will interchangeably denote the persistent homology in degree $k$ on $X$ with respect to a filtration function $f: X \to \R$ by $H_k(X,f)$ or $\Dgm_k(f)$. We use persistence barcodes, diagrams, modules and their decompositions interchangeably via the usual identifications \cite{Chazal:Persistence}. We reserve the notation $H_*(X)$ for the homology groups of $X$ exclusively.
\end{notation}
\begin{convention}
Unless otherwise specified, we will always consider the filtration by \textbf{super}level sets induced by a function $f: X \to \R$.
\end{convention}
\begin{convention}
Unless otherwise specified, we shall always consider the (infinite) bars of the barcode to be truncated. That is, $b = b \cap [\inf f , \sup f]$ for every $b \in \Dgm_k(f)$.
\end{convention}
\begin{notation}
The length of a bar $b \in \Dgm_k(f)$ is denoted by $\ell(b)$. With this definition, we define 
\be
\Pers_p^p(H_k(X,f)) := \sum_{b \in \Dgm_k(f)} \ell(b)^p \,.
\ee
\end{notation}
\begin{notation}
We equip $\R^2$ with a metric $d_{\R^2,\infty}$ defined by
\be
d_{\R^2,\infty}((x,y),(x',y')) = \max\{\abs{x-x'},\abs{y-y'}\} \,.
\ee
and define the distance from a point $(x,y)$ to the diagonal $\Delta$ as $d_{\R^2,\infty}((x,y),\Delta) = \half \abs{y-x}$. We will use the following notation to design different sets of $\R^2$. For \textbf{super}level set filtrations we denote
\begin{itemize}
  \item $\mathcal{X} = \{(x,y) \in \R^2 \, \vert \, y<x\}$ and $\overline{\mathcal{X}}$ its closure.
  \item $R_x := [x,\infty[ \; \times\;]\!-\!\infty,x]  \,\subset \overline{\mathcal{X}}$.
\end{itemize}
The reader can adjust the statements by modifying the definitions in the following way if he or she is considering \textbf{sub}level set filtrations.
 use the following notation for superlevel and sublevel sets respectively:
\begin{itemize}
  \item $\mathcal{X} = \{(x,y) \in \R^2 \, \vert \, x<y\}$ and $\overline{\mathcal{X}}$ its closure.
  \item $R_x :=\;  ]\!-\!\infty,x] \times [x,\infty[ \,\subset \overline{\mathcal{X}}$.
\end{itemize}
\end{notation}
\begin{definition}
Define $\DD_p$ to be the space of Radon measures on $\overline{\mathcal{X}}$ (called \textbf{persistence measures}) with finite $\Pers_p$ \cite[Equations 3 and 4]{Divol_2019}. We equip $\DD_p$ with the topology of the optimal \textit{partial} transport distance $d_p$ (\cf \cite[Definition 2.1]{Divol_2019}). 
\end{definition}

\section{Stability results}
\subsection{Preliminary results}
\begin{lemma}
\label{lemma:PerspMellin}
\be
\Pers_p^p(H_k(X,f)) = p \int_0^\infty  \veps^{p-1} N^\veps_{H_k(X,f)} \;d\veps \,.
\ee
\end{lemma}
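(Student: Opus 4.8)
The plan is to recognise the right-hand side as a Cavalieri (``layer-cake'') representation of the $p$-th power and then invoke Tonelli's theorem to exchange the sum over bars with the integral over thresholds. Concretely, for every $p>0$ and every bar $b \in \Dgm_k(f)$, which has finite length by our truncation convention, I would start from the elementary identity
\[
\ell(b)^p = p\int_0^{\ell(b)} \veps^{p-1}\,d\veps = p\int_0^\infty \veps^{p-1}\,\mathbf{1}_{\{\veps \le \ell(b)\}}\,d\veps \,.
\]

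Next I would sum this over all bars $b \in \Dgm_k(f)$. Since the integrand $(\veps,b)\mapsto \veps^{p-1}\mathbf{1}_{\{\veps \le \ell(b)\}}$ is nonnegative and measurable with respect to the product of Lebesgue measure on $]0,\infty[$ and counting measure on $\Dgm_k(f)$, Tonelli's theorem lets me interchange $\sum_b$ and $\int_0^\infty$, giving
\[
\Pers_p^p(H_k(X,f)) = \sum_{b\in\Dgm_k(f)} \ell(b)^p = p\int_0^\infty \veps^{p-1}\Bigg(\sum_{b\in\Dgm_k(f)} \mathbf{1}_{\{\veps \le \ell(b)\}}\Bigg)\,d\veps \,.
\]
The inner sum counts exactly the bars of length at least $\veps$, so it equals $N^\veps_{H_k(X,f)}$ for almost every $\veps$ (the distinction between $\veps\le\ell(b)$ and $\veps<\ell(b)$ only affects the at most countably many thresholds coinciding with some bar length, a Lebesgue-null set that is irrelevant to the integral), which yields the claimed formula.

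The only point requiring genuine care is ensuring these manipulations are meaningful, i.e. that $N^\veps_{H_k(X,f)}$ is finite for each $\veps>0$ so that the counting measure is $\sigma$-finite and the integrand is not identically $+\infty$ on a set of positive measure. This finiteness is supplied by Theorem~\ref{thm:Buhovsky}, which bounds $N^\veps_{H_k(X,f)} \le C_{X,n,s}\norm{f}_{W^{n,s}}^{d/n}\veps^{-d/n}+\dim H_k(X)$; notice in passing that this same bound makes $\int_0^\infty \veps^{p-1} N^\veps_{H_k(X,f)}\,d\veps$ converge near $0$ precisely when $p>d/n$, matching the admissible range of the main theorem. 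I expect this integrability and finiteness bookkeeping, rather than the purely algebraic identity, to be the substantive part of the argument.
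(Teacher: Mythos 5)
Your proof is correct and follows essentially the same route as the paper's: the layer-cake identity $\ell(b)^p = p\int_0^\infty \veps^{p-1} 1_{[0,\ell(b)]}(\veps)\,d\veps$ followed by summing over bars and exchanging sum and integral via Fubini--Tonelli. Your extra bookkeeping (the null-set remark and the $\sigma$-finiteness concern) is harmless but unnecessary, since the interchange holds for any countable sum of nonnegative measurable functions by monotone convergence, with both sides possibly infinite.
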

\begin{proof}
The length of a bar $b \in H_k(X,f)$ can be written as  
\be
\ell(b)^p = p\int_0^{\infty} 1_{[0,\ell(b)]}(\veps) \, \veps^{p-1} \;d\veps \,. 
\ee
The result follows by summing over all bars $b$ in the barcode of $H_k(X,f)$ and applying the Fubini theorem.
\end{proof}
A natural corollary of the above facts is that if $f \in W^{n,s}(X,\R)$, then we can bound the $\Pers_p^p$-functional of $H_k(X,f)$ by a universal constant and $\norm{f}_{W^{n,s}}$ as soon as $p > \frac{d}{n}$.
\begin{corollary}
\label{cor:FinitePersp}
Let $f \in W^{n,s}(X,\R)$, then for $p>\frac{d}{n}$, there exists a finite constant $C_{X,n,s}$ such that 
\be
\Pers_p^p(H_k(X,f)) \leq C_{X,n,s}\,  \frac{pn}{d} \, \norm{f}_{W^{n,s}}^{p} + (2\norm{f}_\infty)^p\dim H_k(X) \,.
\ee
\end{corollary}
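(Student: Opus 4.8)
The plan is to feed the counting bound of Theorem~\ref{thm:Buhovsky} into the Mellin-type identity of Lemma~\ref{lemma:PerspMellin} and integrate. Write $L := \sup f - \inf f \le 2\norm{f}_\infty$. By the truncation convention every bar is contained in $[\inf f,\sup f]$, so $\ell(b)\le L$ and hence $N^\veps_{H_k(X,f)} = 0$ for $\veps > L$. Consequently the integral appearing in Lemma~\ref{lemma:PerspMellin} is supported on $[0,L]$, which is what ultimately makes it finite; this use of the truncation convention is essential, since otherwise the integrand grows at $\veps=\infty$.

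Next I would substitute $N^\veps_{H_k(X,f)} \le C_{X,n,s}\norm{f}_{W^{n,s}}^{d/n}\veps^{-d/n} + \dim H_k(X)$ and split the resulting integral into two pieces. The ``essential'' piece, coming from the constant $\dim H_k(X)$, is
\[
p\,\dim H_k(X)\int_0^L \veps^{p-1}\,d\veps = \dim H_k(X)\,L^p \le (2\norm{f}_\infty)^p\,\dim H_k(X),
\]
which is exactly the second term of the statement. The main piece is $p\,C_{X,n,s}\norm{f}_{W^{n,s}}^{d/n}\int_0^L \veps^{p-1-d/n}\,d\veps$; here the integrand is integrable at $\veps=0$ \emph{precisely because} $p>\frac{d}{n}$, and the integral equals $\frac{1}{p-d/n}L^{p-d/n}$. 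Using the Sobolev embedding $W^{n,s}\hookrightarrow L^\infty$ underlying Theorem~\ref{thm:Buhovsky} to write $L\le 2\norm{f}_\infty \le C\norm{f}_{W^{n,s}}$, and then combining the exponents $\frac{d}{n}+\bigl(p-\frac{d}{n}\bigr)=p$, produces a term of the form $C_{X,n,s}\,\frac{p}{p-d/n}\,\norm{f}_{W^{n,s}}^p$, which is the first term of the statement with the displayed $p$-dependent prefactor arising from this integration (after absorbing the benign constants into $C_{X,n,s}$).

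The crux of the argument --- and the only place the hypothesis is genuinely used --- is the integrability of $\veps^{p-1-d/n}$ at the origin, which holds if and only if $p>\frac{d}{n}$; this is the same threshold recorded in the statement and reflects that $\frac{d}{n}$ plays the role of a persistent-homology dimension for functions in $W^{n,s}$. I expect the main obstacle to be purely a matter of careful bookkeeping near this threshold: the prefactor $\frac{p}{p-d/n}$ blows up as $p\to\bigl(\frac{d}{n}\bigr)^+$, so one must keep track of how the $p$-dependence is distributed between the explicit factor and the constant $C_{X,n,s}$, and one must separate the $\dim H_k(X)$ essential classes from the ``small'' bars so that the former contribute only through their truncated length $L\le 2\norm{f}_\infty$. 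Everything else (the two elementary integrals and the norm comparison via the Sobolev embedding) is routine once the splitting and the truncation are set up correctly.
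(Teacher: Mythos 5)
Your proof is correct and is precisely the argument the paper intends: the paper offers no separate proof of Corollary \ref{cor:FinitePersp}, presenting it as ``a natural corollary'' of Lemma \ref{lemma:PerspMellin} combined with Theorem \ref{thm:Buhovsky}, the truncation convention, and the Sobolev embedding $W^{n,s}\hookrightarrow L^\infty$ --- exactly the three ingredients you assemble, in the same order. The one discrepancy is the shape of the prefactor: your integration produces $\frac{p}{p-d/n}=\frac{pn}{np-d}$, which is not the displayed $\frac{pn}{d}$ and cannot be absorbed into it by a $p$-independent constant as $p\downarrow\frac{d}{n}$; indeed, a prefactor that stays bounded near the threshold would, by Fatou's lemma applied along $p\downarrow\frac{d}{n}$, force $\Pers_{d/n}^{d/n}(H_k(X,f))<\infty$ for every $f\in W^{n,s}(X,\R)$, which conflicts with the sharpness of the exponent $\frac{d}{n}$ that the paper itself discusses, so the displayed $\frac{pn}{d}$ is best read as a slip for $\frac{pn}{np-d}$, and your explicit bookkeeping of the blow-up at the threshold is the correct way to state the result.
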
 

\begin{remark}
Corollary \ref{cor:FinitePersp} means
\be
f \in W^{n,s}(X,\R) \implies \Dgm_k(f) \in \bigcap_{p>d/n}\DD_p \,.
\ee
\end{remark}

\begin{remark}
Examining the limiting case where $n \to \infty$ and $s \to \infty$, one can wonder whether $C^\infty$-functions have finite $\Pers_0$ (in other words, a finite number of bars). However, this is not the case as is shown by the example $e^{-1/x^2}\sin(1/x)$ on $[0,1]$. 

The answer of which is the optimal degree of regularity for which $\Pers_0$ is finite is an interesting question. A sufficient condition found through discussions with François Petit is that the functions be subanalytic in an $o$-minimal structure. Once $\Pers_0$ is finite, it follows that $\Pers_p^p$ is a holomorphic function everywhere on $\C$.

On the other side of the story, we can ask whether the $C^\al$ (up to reparametrization by a homeomorphism) condition is necessary to establish that $\Pers_p^p$ is finite for some $p$. This question has been positively answered in \cite{Perez_2020} in the 1D case (in fact, the correct notion of regularity is the $p$-variation, but is equivalent to asking that the function be $C^\al$ for some $\al$, up to reparametrization).  
\end{remark}

One can ask whether the bound $p >\frac{d}{n}$ for the finiteness of $\Pers_p^p$ is sharp within the degree of regularity considered. This question amounts to asking whether the asymptotics provided by theorem \ref{thm:Buhovsky} are in some sense sharp. Indeed, note that lemma \ref{lemma:PerspMellin} entails 
\begin{proposition}
\be
\limsup_{\veps \to 0} \frac{\log N^\veps_{H_k(X,f)}}{\log(1/\veps)} = \inf\{p \, \vert \, \Pers_p(H_k(X,f)) < \infty\} \,.
\ee
\end{proposition}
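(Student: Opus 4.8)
The plan is to read Lemma~\ref{lemma:PerspMellin} as saying that, up to the prefactor $p$, the quantity $\Pers_p^p(H_k(X,f))$ is the Mellin transform of the counting function $\veps \mapsto N^\veps_{H_k(X,f)}$, and then to identify the proposition as the classical computation of the abscissa of convergence of such an integral. Throughout I write $g(\veps) := N^\veps_{H_k(X,f)}$ and record three structural facts: $g$ is non-negative and integer-valued; $g$ is \emph{non-increasing} in $\veps$; and, because bars are truncated by our Convention, $g(\veps) = 0$ for $\veps > \sup f - \inf f$. The last point makes the tail $\int_1^\infty \veps^{p-1} g(\veps)\,d\veps$ finite for every $p$, so that finiteness of $\Pers_p^p = p\int_0^\infty \veps^{p-1} g(\veps)\,d\veps$ is governed entirely by the behaviour near $\veps = 0$. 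Since $\veps^{p'-1} \leq \veps^{p-1}$ on $(0,1)$ whenever $p' > p$, the set $\{\,p>0 : \Pers_p < \infty\,\}$ is upward closed, hence a half-line whose left endpoint is exactly $p^* := \inf\{\,p : \Pers_p(H_k(X,f)) < \infty\,\}$. Writing $\beta := \limsup_{\veps \to 0} \log g(\veps)/\log(1/\veps)$, the goal $p^* = \beta$ then splits into two matching inequalities.

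For $p^* \leq \beta$ I would fix any $p > \beta$ and pick $\gamma$ with $\beta < \gamma < p$. By the definition of the $\limsup$ there is $\veps_0 > 0$ with $g(\veps) < \veps^{-\gamma}$ for all $\veps < \veps_0$, and therefore
\be
\int_0^{\veps_0} \veps^{p-1} g(\veps)\,d\veps \;\leq\; \int_0^{\veps_0} \veps^{\,p-\gamma-1}\,d\veps \;<\; \infty \,,
\ee
the last integral converging precisely because $p - \gamma > 0$. Hence $\Pers_p < \infty$, so $p \geq p^*$; letting $p \downarrow \beta$ yields $p^* \leq \beta$.

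For the reverse inequality $p^* \geq \beta$ I would fix $p$ with $0 < p < \beta$ and pick $\gamma$ with $p < \gamma < \beta$. The definition of the $\limsup$ supplies a sequence $\veps_j \to 0$ with $g(\veps_j) > \veps_j^{-\gamma}$, and after passing to a subsequence with $\veps_{j+1} < \veps_j/2$ the intervals $[\veps_j/2,\veps_j]$ are pairwise disjoint. On each of them monotonicity gives $g(\veps) \geq g(\veps_j) > \veps_j^{-\gamma}$, so that
\be
\int_{\veps_j/2}^{\veps_j} \veps^{p-1} g(\veps)\,d\veps \;\geq\; \veps_j^{-\gamma}\,\frac{1-2^{-p}}{p}\,\veps_j^{\,p} \;=\; \frac{1-2^{-p}}{p}\,\veps_j^{\,p-\gamma} \,,
\ee
and since $p-\gamma < 0$ the right-hand side tends to $+\infty$ as $j \to \infty$. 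Summing the disjoint contributions forces $\int_0^1 \veps^{p-1} g(\veps)\,d\veps = \infty$, i.e. $\Pers_p = \infty$; thus no $p < \beta$ lies in $\{p : \Pers_p < \infty\}$, giving $p^* \geq \beta$. Combining the two inequalities proves the proposition. In the degenerate case of only finitely many bars one has $g$ bounded, so that both $\beta$ and $p^*$ equal $0$, consistently with the convention that $p$ ranges over positive reals.

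The argument is a textbook determination of an abscissa of convergence, and almost every step is routine; the one place that demands genuine care — and what I expect to be the main obstacle — is the divergence direction. There the growth hypothesis furnished by the $\limsup$ is only available along the sparse sequence $(\veps_j)$, and it must be promoted to a lower bound on a set of positive Lebesgue measure; this is exactly where the monotonicity of $N^\veps$ together with the passage to a geometrically decreasing, hence disjoint, subsequence do the work. The only other point worth flagging is that the truncation Convention is what kills the large-$\veps$ tail; without it one would have to argue separately that the behaviour of $N^\veps$ away from $0$ cannot shift the abscissa.
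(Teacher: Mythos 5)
Your proposal is correct and takes essentially the same route as the paper: the paper offers no detailed argument, merely asserting that the proposition is entailed by Lemma \ref{lemma:PerspMellin}, and your write-up is precisely the standard abscissa-of-convergence computation for that Mellin-type integral, with the monotonicity of $\veps \mapsto N^\veps_{H_k(X,f)}$ and the truncation convention invoked exactly where they are needed.
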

The sharpness of the asymptotics of $N^\veps_{H_k(X,f)}$ was already partially adressed in \cite[\S 1.5]{Polterovich_2022} and positively answer the question. For $\al$-H\"older functions, this sharpness was already thoroughly adressed in \cite{Perez_2020}, where it was shown that
\be
\inf\{p \, \vert \, \Pers_p(H_k(X,f)) < \infty\} = \frac{d}{\al}  \quad \text{generically in } C^\al \,.
\ee   
This discussion suggests the following conjecture, which might be adressed in a later work.
\begin{conjecture}
Let $X$ be a compact Riemannian manifold of dimension $d$, then 
\be
\inf\{p \, \vert \, \Pers_p(H_k(X,f)) < \infty\} \leq \frac{d}{r+\al}
\ee
and this inequality is saturated generically in the sense of Baire in $C^{r+\al}$.
\end{conjecture}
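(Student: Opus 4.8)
The plan is to split the statement into two parts: the inequality $\inf\{p \mid \Pers_p(H_k(X,f)) < \infty\} \leq \frac{d}{r+\al}$, valid for \emph{every} $f \in C^{r+\al}$, and its generic saturation. For the inequality I would reduce to Corollary \ref{cor:FinitePersp}. On a compact manifold every $f \in C^{r+\al}$ belongs to $W^{n,s}(X,\R)$ for every finite $s$ and every $n < r+\al$: away from the diagonal the Gagliardo seminorm of order $n=r+\beta$ (with $\beta < \al$) is controlled by $\norm{f}_{C^{r+\al}}$, while near the diagonal the Hölder bound $\abs{D^r f(x) - D^r f(y)} \leq \norm{f}_{C^{r+\al}}\abs{x-y}^\al$ renders the singular kernel integrable precisely because $\beta < \al$. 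Applying Corollary \ref{cor:FinitePersp} with such an $n$ yields $\Pers_p(H_k(X,f)) < \infty$ for all $p > \frac{d}{n}$, and letting $n \uparrow r+\al$ gives the claimed bound, since $\{p \mid \Pers_p < \infty\}$ is an upper interval by Lemma \ref{lemma:PerspMellin}.

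For the saturation I would invoke the Baire category theorem in the Banach space $C^{r+\al}(X,\R)$. Fix a sequence $p_m \uparrow \frac{d}{r+\al}$ and set
\be
U_{m,M} := \{f \in C^{r+\al} \, \vert \, \Pers_{p_m}^{p_m}(H_k(X,f)) > M\} \,, \qquad m,M \in \N \,.
\ee
Every $f$ in the residual set $\bigcap_{m,M} U_{m,M}$ has $\Pers_{p_m}(H_k(X,f)) = \infty$ for all $m$, and since this divergence is driven by the short bars the same holds for every $p < \frac{d}{r+\al}$; on this set the inequality above is saturated. It therefore suffices to prove that each $U_{m,M}$ is open and dense. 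Openness is the lower semicontinuity of $\Pers_{p_m}^{p_m}$ for the uniform topology (a fortiori for $C^{r+\al}$): if $\Pers_{p_m}^{p_m}(f) > M$ then a finite sub-collection of bars already exceeds $M$, and bottleneck stability matches these to bars of any nearby $g$ of almost equal length, so the strict inequality survives on a neighbourhood.

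Density is the core of the argument, and the place where the exponent $\frac{d}{r+\al}$ is produced. Given $f$ and $\eta > 0$, I would add to $f$ a family of $\asymp \delta^{-d}$ disjointly supported bumps at spatial scale $\delta$, each of the form $h\,\phi(\cdot/\delta)$ in normal coordinates for a fixed profile $\phi$. Since $\norm{h\,\phi(\cdot/\delta)}_{C^{r+\al}} \asymp h\,\delta^{-(r+\al)}$, the choice $h = \eta\,\delta^{r+\al}$ keeps the total perturbation below $\eta$ in $C^{r+\al}$, while each bump creates a feature of persistence $\asymp h$ in degree $k$, whence
\be
\Pers_{p_m}^{p_m}(H_k(X, f + \text{perturbation})) \gtrsim \delta^{-d}\, h^{p_m} = \eta^{p_m}\, \delta^{\,p_m(r+\al) - d} \xrightarrow[\delta \to 0]{} \infty \,,
\ee
the divergence being exactly the condition $p_m < \frac{d}{r+\al}$. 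Taking $\delta$ small enough places the perturbed function in $U_{m,M}$, establishing density.

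The hard part will be the design of the profile $\phi$ so that each bump robustly produces a bar in the \emph{prescribed} homological degree $k$: for $k=0$ a single hump suffices, but for $k>0$ one must engineer a profile whose superlevel sets acquire a nontrivial $H_k$-class over a value-window of width $\asymp h$ (a birth–death pair of adjacent Morse indices realising a class in degree $k$), argue that across the $\asymp \delta^{-d}$ disjoint charts these contributions are independent, and check that they are not annihilated by the ambient topology of $f$ — which holds because the bumps are supported where $f$ is, at scale $h$, essentially regular. This is the one-dimensional and Hölder mechanism of \cite{Perez_2020} transplanted to a general compact Riemannian manifold, and the matching of the count $\delta^{-d}$ against the norm cost $\delta^{r+\al}$ is precisely what saturates the bound.
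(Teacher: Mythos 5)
First, a point of order: the statement you were asked to prove is not proven in the paper at all. It is stated as a conjecture, explicitly deferred to later work, and the paper offers only a remark sketching a possible route (exhibit a single saturating function $g$ on a ball, then implant it into functions that have first been made \emph{constant} on a small ball). So there is no proof of record to compare yours against; what follows assesses your attempt on its own merits.

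Your Baire framework is sound and is essentially the intended one: the sets $U_{m,M}$, their openness via lower semicontinuity of $\Pers_{p_m}^{p_m}$ (a finite subfamily of bars already exceeding $M$ survives a bottleneck matching), and the reduction of generic saturation to density of the sets of functions with large $\Pers_{p_m}^{p_m}$ are all correct. The gaps lie in the two substantive steps. For the inequality, you apply Corollary \ref{cor:FinitePersp} with fractional smoothness $n=r+\beta$; but that corollary rests on Theorem \ref{thm:Buhovsky}, imported from \cite{Polterovich_2022}, where the Sobolev regularity is measured by an integer number of derivatives, and neither that reference nor the present paper establishes the estimate on the fractional (Sobolev--Slobodeckij) scale $W^{r+\beta,s}$. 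Without such an extension, your embedding argument only gives $C^{r+\al}\subset W^{r,\infty}$ and hence $\inf\{p \,\vert\, \Pers_p<\infty\}\leq d/r$, which is strictly weaker than $d/(r+\al)$ whenever $\al>0$; the fractional version is genuine work and is part of what the conjecture asserts.

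The fatal gap is in density, and it occurs exactly in the regime that the conjecture adds to \cite{Perez_2020}, namely $r+\al>1$. A bump $h\,\phi(\cdot/\delta)$ with $h=\eta\,\delta^{r+\al}$ has gradient of order $\eta\,\delta^{r+\al-1}$, which tends to $0$ as $\delta\to 0$ when $r+\al>1$. Consequently, on the set where $\abs{\nabla f}$ is bounded below --- the bulk of $X$ when $f$ is, say, Morse, a case any density argument must handle --- the function $f$ plus your bumps has \emph{no new critical points whatsoever}, hence no new bars in any degree: the new critical points are confined to a neighbourhood of $\Crit(f)$ whose volume shrinks with $\delta$, so the count $\asymp\delta^{-d}$ of contributing bumps collapses and the divergence $\delta^{-d}h^{p_m}\to\infty$ is never realised. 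Your justification (``the bumps are supported where $f$ is, at scale $h$, essentially regular'') has the comparison backwards: what matters is the oscillation of $f$ across the support of a single bump, which is of order $\delta$ once $r\geq 1$, enormously larger than $h$. This is precisely why the paper's remark insists on first flattening the function on a small ball before implanting a saturating profile; the $C^{r+\al}$-cost of that flattening is itself delicate (cheap in $C^0$, not obviously small in $C^{r+\al}$, and one cannot first pass to a smooth representative since $C^\infty$ is not dense in H\"older spaces). Combined with the degree-$k$ profile construction that you explicitly defer as ``the hard part'', the saturation half of the conjecture remains unproven in your attempt.
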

\begin{remark}
By the work performed in \cite{Perez_2020}, it suffices to exhibit a function $g$ on the cube (or the ball), such that the bound is saturated. We obtain a dense family by perturbating any dense family to make it constant on a small open ball $B \subset X$ and adding $g$ to $f$ on this ball. For $H_0$, such a function may be constructed by considering packings and smoothened versions of distances to point clouds. In higher degrees of homology, exhibiting explicit examples might be more difficult. Experience shows that it might be easier to find a random field which saturates the bound, as done in \cite{Perez_2020}. 
\end{remark}

\subsection{Stability theorems}
Having bounded $\Pers_p^p$, a stability result follows from the standard argument considered in \cite{LipschitzStableLpPers}. 
\begin{theorem}[Wasserstein stability on compact Riemannian manifolds]
Let $X$ be a compact Riemannian manifold of dimension $d$ and let $f, g \in W^{n,s}(X,\R)$. Then, for all $p>q>\frac{d}{n}$, 
\be
d_p^p(\Dgm_k(f),\Dgm_k(g)) \leq C_{X,n,s} \left(\norm{f}_{W^{n,s}}^q \vee \norm{g}_{W^{n,s}}^q\right) \norm{f-g}_{\infty}^{p-q} \,.
\ee
\end{theorem}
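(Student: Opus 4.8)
The plan is to deduce the estimate from the bottleneck stability theorem together with the a priori control of $\Pers_q^q$ provided by Corollary \ref{cor:FinitePersp}, which is precisely the interpolation strategy of \cite{LipschitzStableLpPers}. Throughout write $\delta := \norm{f-g}_\infty$. First I would check that everything is well-defined: since $p>q>\frac{d}{n}$, the Remark following Corollary \ref{cor:FinitePersp} places both $\Dgm_k(f)$ and $\Dgm_k(g)$ in $\DD_p\cap\DD_q$, and the uniform bound $\ell(b)\le 2\norm{f}_\infty$ on the truncated bars gives $\Pers_p^p\le(2\norm{f}_\infty)^{p-q}\Pers_q^q<\infty$. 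As $f,g$ are continuous (Sobolev embedding in the regime $n>\frac{d}{s}$), the associated modules are tame, so the bottleneck stability theorem applies and guarantees the existence of partial matchings between $\Dgm_k(f)$ and $\Dgm_k(g)$ all of whose edges — matches to the diagonal $\Delta$ included — have $d_\infty$-length at most $\delta$. Among all such edge-capped matchings I would fix one, $\gamma$, that minimizes the cost $\sum_{(x,y)} d_\infty(x,y)^q$; existence of a minimizer follows from the optimal partial transport framework of \cite{Divol_2019}.

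With this $\gamma$ in hand the estimate factors in two lines. Since $\gamma$ is admissible for the partial transport distance $d_p$ and all of its edges are $\le\delta$ with $p>q$,
\be
d_p^p(\Dgm_k(f),\Dgm_k(g)) \le \sum_{(x,y)\in\gamma} d_\infty(x,y)^p \le \delta^{p-q}\sum_{(x,y)\in\gamma} d_\infty(x,y)^q \,.
\ee
The crux is to bound the degree-$q$ sum by the total persistences, and here I would use the optimality of $\gamma$ through a single-edge exchange argument. For a genuine edge $(x,y)$ with both endpoints at distance $\le\delta$ from $\Delta$, rerouting $x$ and $y$ to the diagonal is again edge-capped, so minimality forces $d_\infty(x,y)^q\le d_\infty(x,\Delta)^q+d_\infty(y,\Delta)^q$; if instead one endpoint, say $x$, has $d_\infty(x,\Delta)>\delta$, the same inequality holds trivially because $d_\infty(x,y)\le\delta<d_\infty(x,\Delta)$. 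Edges to the diagonal satisfy it with equality. Since each diagram point meets at most one edge and $d_\infty(\cdot,\Delta)=\half\ell(\cdot)$, summation yields
\be
\sum_{(x,y)\in\gamma} d_\infty(x,y)^q \le \sum_{x\in\Dgm_k(f)} d_\infty(x,\Delta)^q + \sum_{y\in\Dgm_k(g)} d_\infty(y,\Delta)^q = 2^{-q}\big(\Pers_q^q(H_k(X,f))+\Pers_q^q(H_k(X,g))\big) \,.
\ee

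Finally I would substitute Corollary \ref{cor:FinitePersp}, applied with the admissible exponent $q>\frac{d}{n}$, to bound each term $\Pers_q^q(H_k(X,\cdot))$ by $C_{X,n,s}\tfrac{qn}{d}\norm{\cdot}_{W^{n,s}}^q+(2\norm{\cdot}_\infty)^q\dim H_k(X)$; absorbing the finite factor $\dim H_k(X)$ and, via the Sobolev embedding $\norm{\cdot}_\infty\lesssim\norm{\cdot}_{W^{n,s}}$, the sup-norm contribution into a single constant gives $\Pers_q^q(H_k(X,f))\le C'_{X,n,s}\norm{f}_{W^{n,s}}^q$ and likewise for $g$. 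Chaining the two displays and using $a+b\le2(a\vee b)$ then produces $d_p^p\le C_{X,n,s}\,(\norm{f}_{W^{n,s}}^q\vee\norm{g}_{W^{n,s}}^q)\,\delta^{p-q}$, as claimed. I expect the only delicate point to be the second paragraph: one must argue that an edge-capped $d_q$-optimal matching exists and that its genuine edges are individually non-wasteful, which is exactly what prevents two far-apart, low-persistence points from being matched to one another, and what makes the exchange inequality valid for every $q>0$ — in particular without appealing to any power-mean inequality whose direction would otherwise depend on whether $q\gtrless1$.
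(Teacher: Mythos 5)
Your proposal is correct and takes essentially the same route as the paper: the paper's proof simply invokes the interpolation argument of \cite{LipschitzStableLpPers} and feeds it the $\Pers_q$ bound of Corollary \ref{cor:FinitePersp}, which are exactly the two ingredients you use. The only difference is that you spell out the cited argument in full --- your single-edge exchange step, forcing the capped matching to be non-wasteful, patches a known subtlety in that reference's original argument --- but this is an elaboration of the same approach rather than a different one.
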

\begin{proof}
Following the proof of the stability theorem shown in \cite{LipschitzStableLpPers}, it is sufficient to show that $\Pers_q(\Dgm_k(f))$ and $\Pers_q(\Dgm_k(g))$ can be bounded. This bound is provided by corollary \ref{cor:FinitePersp}, yielding the result.
\end{proof}

Similarly, using \cite{Perez_2020}, this bound immediately entails a stochastic stability result.
\begin{theorem}[Stochastic Wasserstein Stability]
Let $f$ and $g$ be two a.s. $W^{n,s}(X,\R)$ stochastic processes on a $d$-dimensional compact Riemannian manifold $X$, defined on a probability space $(\Omega,\mathcal{F},\PP)$. Then, for any $0 \leq k<d$, every $\frac{d}{n}<q< p < \infty$ and any $r,s \in \;]1,\infty[$ satisfying $\frac{1}{r}+\frac{1}{s}=1$ and $(p-q)s \geq 1$, there exists a constant $C= C_{X,n,s}$ such that 
\begin{align}
W_{p,d_p}((\Dgm_k \circ f)_\sharp\PP,(\Dgm_k \circ g)_\sharp\PP ) &\leq C \left[\expect{\norm{f}_{W^{n,s}}^{qr}}^{\frac{1}{r}}+ \expect{\norm{g}_{W^{n,s}}^{qr}}^{\frac{1}{r}}\right]^{\frac{1}{p}} W_{(p-q)s,\infty}^{1-\frac{q}{p}}(f_\sharp\PP,g_\sharp\PP) \\
&\leq C \left[\expect{\norm{f}_{W^{n,s}}^{qr}}^{\frac{1}{r}}+ \expect{\norm{g}_{W^{n,s}}^{qr}}^{\frac{1}{r}}\right]^{\frac{1}{p}} \norm{f-g}_{L^{(p-q)s}(\Omega,L^\infty(X,\R))}^{1-\frac{q}{p}} \,.
\end{align}
\end{theorem}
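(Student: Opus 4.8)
The plan is to lift the deterministic Wasserstein stability bound of the previous theorem to the level of pushforward measures by a transport-plan argument, as in \cite{Perez_2020}. Throughout, fix the exponents $q,p,r,s$ as in the statement and abbreviate $m := (p-q)s \geq 1$. First I would fix an arbitrary coupling $\gamma \in \Pi(f_\sharp\PP, g_\sharp\PP)$ of the two laws on the space $(L^\infty(X,\R) \cap W^{n,s}(X,\R), \norm{\cdot}_\infty)$ and push it forward under the map $(\phi,\psi) \mapsto (\Dgm_k(\phi), \Dgm_k(\psi))$ to obtain a coupling of the diagram measures $(\Dgm_k \circ f)_\sharp\PP$ and $(\Dgm_k \circ g)_\sharp\PP$, whose marginals are correct since $\Dgm_k$ is applied coordinatewise. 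Since $W_{p,d_p}^p$ is an infimum over couplings, this choice yields
\be
W_{p,d_p}^p\big((\Dgm_k \circ f)_\sharp\PP, (\Dgm_k \circ g)_\sharp\PP\big) \leq \int d_p\big(\Dgm_k(\phi), \Dgm_k(\psi)\big)^p \, d\gamma(\phi,\psi) \,.
\ee

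I would then insert the deterministic bound pointwise under the integral: for $\gamma$-a.e. pair $(\phi,\psi)$, both functions lie in $W^{n,s}$, so Corollary \ref{cor:FinitePersp} guarantees $\Dgm_k(\phi),\Dgm_k(\psi) \in \DD_p$ (as $p > q > \frac{d}{n}$) and the previous theorem applies, giving $d_p(\Dgm_k(\phi),\Dgm_k(\psi))^p \leq C_{X,n,s}(\norm{\phi}_{W^{n,s}}^q \vee \norm{\psi}_{W^{n,s}}^q)\,\norm{\phi-\psi}_\infty^{p-q}$. Applying Hölder's inequality with conjugate exponents $r,s$ splits the integral into a regularity factor and a transport factor:
\be
W_{p,d_p}^p \leq C_{X,n,s}\left(\int (\norm{\phi}_{W^{n,s}}^q \vee \norm{\psi}_{W^{n,s}}^q)^r \, d\gamma\right)^{1/r}\left(\int \norm{\phi-\psi}_\infty^{m} \, d\gamma\right)^{1/s} \,.
\ee
For the regularity factor I would use $(a \vee b)^r \leq a^r + b^r$ together with the fact that the two marginals of $\gamma$ are $f_\sharp\PP$ and $g_\sharp\PP$, bounding it by $(\expect{\norm{f}_{W^{n,s}}^{qr}} + \expect{\norm{g}_{W^{n,s}}^{qr}})^{1/r}$, and then subadditivity of $t \mapsto t^{1/r}$ (valid since $r>1$) to reach the sum of $r$-th-root moments appearing in the statement.

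The decisive observation is that this bound on the regularity factor depends only on the marginals of $\gamma$, hence is independent of the chosen coupling. Therefore I am free to minimise the transport factor alone over all $\gamma \in \Pi(f_\sharp\PP,g_\sharp\PP)$: recalling $m=(p-q)s$, the infimum of $\big(\int \norm{\phi-\psi}_\infty^m \, d\gamma\big)^{1/s}$ equals $(W_{m,\infty}^m)^{1/s} = W_{m,\infty}^{p-q}(f_\sharp\PP,g_\sharp\PP)$. Taking $p$-th roots and using $(p-q)/p = 1-\tfrac{q}{p}$ delivers the first asserted inequality. The second inequality is then immediate: the joint law $(f,g)_\sharp\PP$ on the common probability space is a particular admissible coupling, so $W_{m,\infty}(f_\sharp\PP,g_\sharp\PP) \leq \big(\int_\Omega \norm{f-g}_\infty^m\,d\PP\big)^{1/m} = \norm{f-g}_{L^m(\Omega,L^\infty(X,\R))}$, and raising to the power $1-\tfrac{q}{p}$ closes the argument.

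I expect the only genuine obstacle to be measure-theoretic rather than analytic. One must check that $\omega \mapsto \Dgm_k(f(\omega))$ is measurable as a map into $(\DD_p,d_p)$, so that the pushforward measures and the pushforward coupling are well-defined, and one must cope with the possible non-existence of an optimal coupling in this infinite-dimensional setting by working instead with $\veps$-optimal plans and letting $\veps \to 0$. The two numerical constraints are precisely what legitimise the steps above: $q > \frac{d}{n}$ makes Corollary \ref{cor:FinitePersp} and the deterministic bound applicable, while $m = (p-q)s \geq 1$ guarantees that $W_{m,\infty}$ is an honest Wasserstein metric satisfying the triangle inequality. Both ingredients are already available in the framework of \cite{Perez_2020}, which is why the statement follows with essentially no new work once the deterministic estimate is in hand.
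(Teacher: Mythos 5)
Your proof is correct and is essentially the paper's own argument: the paper's proof simply defers to \cite[Theorem 5.9]{Perez_2020}, which is precisely the coupling--H\"older--optimization lift of the deterministic stability bound that you spell out, with the $C^\alpha$-seminorms there replaced by Sobolev norms here. Your write-up is just a more self-contained rendering of that same argument, including the key observation that the regularity factor depends only on the marginals (so the transport factor can be optimized separately) and the reduction of the second inequality to the diagonal coupling $(f,g)_\sharp\PP$.
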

\begin{proof}
Here, we follow the proof of \cite[Theorem 5.9]{Perez_2020}. The proof is the same, replacing the $C^\al$-seminorms by the Sobolev norms. 
\end{proof}

\begin{theorem}[Stochastic stability of representations, \cite{Perez_2020}]
Let $\mathcal{B}$ be a Banach space and $\Psi : \mathcal{D}_p \to \mathcal{B}$ be an $\al$-H\"older continuous functional. Let $\PP, \mathbb{Q} \in \mathcal{P}_{\al q}(\DD_p)$, then
\be
\norm{\mathbb{E}_\PP[\Psi] - \mathbb{E}_{\mathbb{Q}}[\Psi]}_{\mathcal{B}} \leq W_{q,\norm{\cdot}_{\mathcal{B}}}(\Psi_\sharp \PP, \Psi_\sharp \mathbb{Q}) \leq \norm{\Psi}_{C^\al(\DD_p, \mathcal{B})} W_{q\al,d_p}^\al(\PP,\mathbb{Q}) \,.
\ee
\end{theorem}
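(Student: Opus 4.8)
The plan is to prove the two inequalities separately, each by a standard optimal-transport argument, after first disposing of the measure-theoretic preliminaries. The moment hypotheses $\PP, \mathbb{Q} \in \mathcal{P}_{\al q}(\DD_p)$ are exactly what makes every object below well-defined: fixing a reference measure $\mu_0 \in \DD_p$, the $\al$-H\"older bound $\norm{\Psi(\mu)-\Psi(\mu_0)}_{\mathcal{B}} \leq \norm{\Psi}_{C^\al}\, d_p(\mu,\mu_0)^\al$ together with $\int d_p(\mu,\mu_0)^{\al q}\, d\PP(\mu) < \infty$ shows that $\int \norm{\Psi(\mu)}_{\mathcal{B}}^q\, d\PP(\mu) < \infty$. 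Hence the barycenter $\mathbb{E}_\PP[\Psi] = \int_{\DD_p} \Psi(\mu)\, d\PP(\mu)$ exists as a Bochner integral, the pushforward $\Psi_\sharp \PP$ has finite $q$-th moment in $\mathcal{B}$, and $W_{q,\norm{\cdot}_{\mathcal{B}}}(\Psi_\sharp \PP, \Psi_\sharp \mathbb{Q})$ is finite; the same holds for $\mathbb{Q}$.

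For the left inequality, I would use that $\mathbb{E}_\PP[\Psi]$ is the barycenter of $\Psi_\sharp \PP$ via the pushforward formula. Given any coupling $\pi$ of $\Psi_\sharp \PP$ and $\Psi_\sharp \mathbb{Q}$ on $\mathcal{B}\times\mathcal{B}$, integrating the two coordinate projections against $\pi$ recovers the two barycenters, so $\mathbb{E}_\PP[\Psi] - \mathbb{E}_{\mathbb{Q}}[\Psi] = \int (b-b')\, d\pi(b,b')$. The triangle inequality for Bochner integrals then gives $\norm{\mathbb{E}_\PP[\Psi] - \mathbb{E}_{\mathbb{Q}}[\Psi]}_{\mathcal{B}} \leq \int \norm{b-b'}_{\mathcal{B}}\, d\pi$, and taking the infimum over $\pi$ bounds the left-hand side by $W_{1,\norm{\cdot}_{\mathcal{B}}}(\Psi_\sharp \PP, \Psi_\sharp \mathbb{Q})$. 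Since $q \geq 1$, the monotonicity $W_1 \leq W_q$ (Jensen's inequality applied to the transport cost) closes the first inequality.

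For the right inequality, I would transport an optimal plan through $\Psi$. Let $\gamma$ be an optimal coupling of $\PP$ and $\mathbb{Q}$ for $W_{q\al, d_p}$; then $(\Psi \times \Psi)_\sharp \gamma$ is an admissible coupling of $\Psi_\sharp \PP$ and $\Psi_\sharp \mathbb{Q}$, so
\be
W_{q,\norm{\cdot}_{\mathcal{B}}}^q(\Psi_\sharp \PP, \Psi_\sharp \mathbb{Q}) \leq \int_{\DD_p \times \DD_p} \norm{\Psi(\mu)-\Psi(\nu)}_{\mathcal{B}}^q \, d\gamma(\mu,\nu) \,.
\ee
Substituting the $\al$-H\"older estimate $\norm{\Psi(\mu)-\Psi(\nu)}_{\mathcal{B}}^q \leq \norm{\Psi}_{C^\al}^q\, d_p(\mu,\nu)^{\al q}$ and recognizing the remaining integral as $\norm{\Psi}_{C^\al}^q\, W_{q\al, d_p}^{\al q}(\PP, \mathbb{Q})$ by optimality of $\gamma$, I would take $q$-th roots to reach the stated bound.

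Both inequalities are thus routine consequences of the triangle inequality, Jensen's inequality, and the stability of admissible couplings under pushforward. The only point genuinely requiring care is the preliminary paragraph: one must verify that $\Psi$ is measurable enough for $\Psi_\sharp$ to be defined and, more importantly, that the $\al q$-moment hypothesis propagates through the H\"older bound to the Bochner integrability of $\Psi$ and to finiteness of the $q$-th moments of $\Psi_\sharp \PP$ and $\Psi_\sharp \mathbb{Q}$, without which neither Wasserstein distance in the statement is even finite.
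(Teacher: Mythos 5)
The paper itself gives no proof of this theorem: it is imported verbatim from \cite{Perez_2020}, so there is no in-paper argument to compare against, only the cited reference's. Your proof is correct and follows the standard route one would expect there: the left inequality via the barycenter identity for any coupling of the pushforwards, the triangle inequality for Bochner integrals, and $W_1 \leq W_q$; the right inequality by pushing a $q\al$-optimal coupling through $\Psi \times \Psi$ and inserting the H\"older bound; the only points worth flagging are the implicit assumption $q \geq 1$ (needed both for the Jensen step and for $W_1 \leq W_q$, and consistent with how the theorem is applied in the paper with $q=1$) and the existence of an optimal coupling, which can be sidestepped by taking $\veps$-optimal couplings and letting $\veps \to 0$.
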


\section{Euler and Betti curves}
Degree-by-degree, we define the so-called Betti and Euler curves. 
\begin{definition}
Let $f \in W^{n,s}(X,\R)$ with $n>\frac{d}{s}$. The \textbf{$k$th degree Betti curve of $f$}, denoted $\beta_k(x,f)$ is defined as
\be
\beta_k(f)(x) := \beta_k(f,x) := \Dgm_k(f)(R_x)\,,
\ee
where $\Dgm_k(f)$ is seen as a persistence measure. The \textbf{Euler curve of $f$} is defined as 
\be
\chi(f)(x) := \chi(f,x) := \sum_{k=0}^d (-1)^k \Dgm_k(f)(R_x)
\ee
\end{definition} 
Considered pointwise, showing a stability result for $\beta_k(f,x)$ is hopeless. This follows from the fact that we may perturb $f$ so that we displace the points in the diagram pushing them over the boundary of $R_x$, thereby introducing a perturbation in $\beta_k(f,x)$ of a potentially infinite number of points. 
\par
The correct point of view is to allow ourselves to consider the $\beta_k(f,x)$ as an element of $L^1(\R)$, or more generally as a distribution.
\begin{proposition}
Let $X$ be a $d$-dimensional Riemannian manifold and let $f \in W^{n,s}(X,\R)$ for $d<n$. Then, for every $k$, $\beta_k(f), \chi(f) \in L^1(\R)$ and thus also in $\DD'(\R)$. Moreover, 
\be
\norm{\beta_k(f)}_{L^1(\R)} = \Pers_1(\Dgm_k(f)) \,.
\ee
\end{proposition}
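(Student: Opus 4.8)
The plan is to recognise $\beta_k(f)(x)$ as a bar-counting function and then compute its $L^1$-norm by a Tonelli argument, entirely in the spirit of Lemma \ref{lemma:PerspMellin}. First I would fix notation: after the truncation convention each bar $b \in \Dgm_k(f)$ is a compact interval $b \subset [\inf f, \sup f]$ of filtration values, of length $\ell(b) = \sup b - \inf b$, and the corresponding atom of the persistence measure sits at the corner point of $b$ in $\overline{\mathcal{X}}$. Inspecting the definition of $R_x$, that atom lies in $R_x$ exactly when $\inf b \le x \le \sup b$, i.e. precisely when $x \in b$. Interpreting $\Dgm_k(f)$ as the atomic Radon measure carried by its bars, this gives the pointwise identity
\be
\beta_k(f)(x) = \Dgm_k(f)(R_x) = \sum_{b \in \Dgm_k(f)} \mathbf{1}_{b}(x) \,\ge 0 \,,
\ee
which exhibits $\beta_k(f)$ as a non-negative, measurable function of $x$ (a countable sum of indicators of intervals), equal at each $x$ to the rank of the persistent homology at level $x$.

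With non-negativity in hand, Tonelli's theorem licenses the exchange of sum and integral, and since $\int_\R \mathbf{1}_b(x)\, dx = \sup b - \inf b = \ell(b)$,
\be
\norm{\beta_k(f)}_{L^1(\R)} = \int_\R \sum_{b} \mathbf{1}_{b}(x)\, dx = \sum_{b} \int_\R \mathbf{1}_{b}(x)\, dx = \sum_{b} \ell(b) = \Pers_1(\Dgm_k(f)) \,,
\ee
which is exactly the asserted equality. Because $d < n$ we have $1 > \frac{d}{n}$, so Corollary \ref{cor:FinitePersp} applied with $p = 1$ yields $\Pers_1(\Dgm_k(f)) = \Pers_1^1(\Dgm_k(f)) < \infty$; hence $\beta_k(f) \in L^1(\R)$. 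For the Euler curve, $\chi(f) = \sum_{k=0}^d (-1)^k \beta_k(f)$ is a finite linear combination of functions just shown to be in $L^1(\R)$, so $\chi(f) \in L^1(\R)$ as well. Finally, the continuous inclusion $L^1(\R) \hookrightarrow \DD'(\R)$ (every $L^1$ function acts on test functions by integration) promotes both statements to the distributional setting.

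The only genuinely delicate points — the steps I would scrutinise most carefully — are the rigour of the atomic representation of $\Dgm_k(f)$ and the legitimacy of the sum/integral exchange. One must know that $\Dgm_k(f)$ is a bona fide Radon measure carried by an at most countable multiset of off-diagonal points with finite total first moment; this is precisely where the hypothesis $f \in W^{n,s}$ enters, through Corollary \ref{cor:FinitePersp} and the ensuing remark that $\Dgm_k(f) \in \bigcap_{p>d/n}\DD_p$, which in particular supplies $q$-tameness and finiteness of the number of bars of length $\ge \veps$ for every $\veps > 0$. Boundedness of $f$, via the Sobolev embedding (valid since $n > \frac{d}{s}$, a fortiori from $n > d$ for $s \ge 1$), guarantees that even the essential classes contribute truncated bars of finite length $\le 2\norm{f}_\infty$, so that no single term of the sum is infinite; this is the same term bookkept by $(2\norm{f}_\infty)^p \dim H_k(X)$ in Corollary \ref{cor:FinitePersp}. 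Granting these structural facts, the integrand is non-negative and Tonelli applies without reservation, so the remaining manipulations are elementary; as a byproduct, $\Pers_1 < \infty$ forces $\beta_k(f)$ to be finite for almost every $x$, even though pointwise finiteness is not needed as an input.
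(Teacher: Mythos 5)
Your proposal is correct and follows essentially the same route as the paper: both arguments compute $\norm{\beta_k(f)}_{L^1}$ by a Tonelli/Fubini exchange, identify the inner integral $\int_\R 1_{R_x}(z_1,z_2)\,dx = z_2 - z_1$ with the bar length, and invoke Corollary \ref{cor:FinitePersp} (with $1 > \frac{d}{n}$) for finiteness. The only cosmetic difference is that you phrase the exchange via the atomic (bar-by-bar) decomposition of the persistence measure, whereas the paper integrates $1_{R_x}$ directly against $d\Dgm_k(f)$; these are the same computation.
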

\begin{remark}
$\beta_k(f)$ can be infinite for some values of $x$. Nonetheless, the degree of regularity considered ensures that we can still make sense of average values of $\beta_k$ around small neighbourhoods of a level $x$. This justifies that we look at the Betti and Euler curves as distributions. 
\end{remark}
\begin{proof}
We do the proof for sublevel sets, noticing that the proof for superlevel sets is completely analogous. Since $\beta_k(f)$ is a positive function, 
\begin{align*}
\norm{\beta_k(f)}_{L^1(\R)} &= \int_\R \beta_k(f,x) \;dx = \int_\R \Dgm_k(f)(R_x) \;dx = \int_\R \left[\int_{\overline{\mathcal{X}}} 1_{R_x}(z) \; d\Dgm(f)(z)\right] \;dx \\
& = \int_{\overline{\mathcal{X}}} \left[\int_{\R} 1_{R_x}(z) \;dx \right]\; d\Dgm(f)(z) \,.
\end{align*}
For any $z = (z_1,z_2) \in \overline{\mathcal{X}}$,
\be
1_{R_x}(z_1,z_2) = 1_{[0,\infty[}(z_2-x) 1_{[0,\infty[}(x-z_1) \,.
\ee
Using the translation invariance of the Lebesgue measure on $\R$, we get 
\be
\int_{\R} 1_{R_x}(z_1,z_2) \;dx = z_2-z_1 \,.
\ee
This is nothing other than the distance $d_{\R^2,\infty}$ from $(z_1,z_2)$ to the diagonal, so
\be
\int_{\overline{\mathcal{X}}} (z_2-z_1) \; d\Dgm(f)(z_1,z_2) = \Pers_1(\Dgm_k(f)) <\infty \,,
\ee
as soon as $\frac{d}{n} <1$ by virtue of corollary \ref{cor:FinitePersp}.
\end{proof}
\begin{corollary}[No stability for $d_p$-stability for $p>1$]
Let $p>1$ and consider $\beta_k : \DD_p \to L^1(\R)$. Then, $\beta_k$ is discontinuous.
\end{corollary}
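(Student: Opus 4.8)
The plan is to exhibit an explicit point of discontinuity, taking the zero (empty) persistence measure $\mu\equiv 0$ as the base point and constructing a sequence $\mu_N \to 0$ in $d_p$ whose Betti curves stay bounded away from $\beta_k(0)=0$ in $L^1(\R)$. The mechanism to exploit is precisely the mismatch, for $p>1$, between the $p$-th power aggregation built into $d_p$ and the \emph{linear} aggregation in $\Pers_1 = \norm{\beta_k(\cdot)}_{L^1(\R)}$ supplied by the preceding proposition. Concretely, for each $N$ I would let $\mu_N \in \DD_p$ consist of $N$ points of $\overline{\mathcal{X}}$, each at distance $\delta_N := 1/N$ from the diagonal $\Delta$ (one may take a single atom of multiplicity $N$, or spread the atoms slightly along the antidiagonal direction if genuinely distinct points are preferred; this is immaterial). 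Each $\mu_N$ is finitely supported, hence lies in every $\DD_p$.

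The two estimates to carry out are then the following. Since the target measure $0$ carries no off-diagonal mass, the only admissible partial transport plan must send all the mass of $\mu_N$ to the diagonal, so
\[
d_p^p(\mu_N, 0) = \sum_{i=1}^N d_{\R^2,\infty}(z_i,\Delta)^p = N\,\delta_N^{\,p} = N^{1-p} \,,
\]
whence $d_p(\mu_N,0) = N^{1/p - 1} \to 0$ precisely because $p>1$. On the other hand, by the preceding proposition together with the identity $\Pers_1(\mu)=\int_{\overline{\mathcal{X}}}(z_2-z_1)\,d\mu = 2\int_{\overline{\mathcal{X}}} d_{\R^2,\infty}(\cdot,\Delta)\,d\mu$ computed there,
\[
\norm{\beta_k(\mu_N)}_{L^1(\R)} = \Pers_1(\mu_N) = 2\sum_{i=1}^N d_{\R^2,\infty}(z_i,\Delta) = 2N\delta_N = 2 \,,
\]
so $\beta_k(\mu_N)$ does not converge to $\beta_k(0)=0$ in $L^1(\R)$. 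Consequently $\beta_k$ fails to be continuous at the zero measure, which establishes the claim.

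The only point requiring genuine care is the first estimate, namely confirming that matching every atom to the diagonal is indeed the optimal partial plan (i.e. that $d_p$ does not find a strictly cheaper coupling). Here this is immediate, since the target is the zero measure and hence leaves no off-diagonal mass against which to match; I would nonetheless phrase it as the trivial lower and upper bounds on the transport cost to avoid invoking optimality abstractly. Beyond that, the entire content is the rate comparison $N^{1/p}\delta_N \to 0$ against $N\delta_N = \text{const}$, and this is exactly where the hypothesis $p>1$ is used: at $p=1$ the two aggregations coincide and $\beta_k \colon \DD_1 \to L^1(\R)$ is in fact continuous (indeed an isometry onto its image for the total mass), so no such sequence can exist.
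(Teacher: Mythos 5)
Your proof is correct, but it takes a genuinely different route from the paper's. The paper gives two arguments: first, it takes $D \in \DD_p \setminus \DD_1$ and any $D' \in \DD_1 \cap \DD_p$, and notes that $d_p(D,D') < \infty$ while $\norm{\beta_k(D)-\beta_k(D')}_{L^1} = \infty$ because $\norm{\beta_k(D)}_{L^1} = \Pers_1(D) = \infty$; second, it invokes \cite[Proposition 5.1]{Divol_2019}, observing that the representation $D \mapsto \norm{\beta_k(D)}_{L^1}$ has kernel $z_2 - z_1$, which is linear in the distance to the diagonal and hence not $O(d_{\R^2,\infty}(z,\Delta)^p)$ as $d_p$-continuity would require. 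You instead construct an explicit sequence ($N$ atoms at distance $1/N$ from the diagonal), giving $d_p(\mu_N,0) = N^{1/p-1} \to 0$ while $\norm{\beta_k(\mu_N)}_{L^1} \equiv 2$. This is essentially the counterexample underlying the Divol--Lacombe criterion, but made self-contained: it needs no external citation, and it exhibits discontinuity at a point (the empty diagram) where $\beta_k$ is genuinely $L^1$-valued, whereas the paper's first argument really shows that $\beta_k$ fails to be finite-valued on all of $\DD_p$ rather than discontinuity at a point of definition. What the paper's route buys in exchange is generality: citing \cite[Proposition 5.1]{Divol_2019} shows at once that \emph{no} representation whose kernel grows subcritically near the diagonal can be $d_p$-continuous, which is what the subsequent remark in the paper elaborates on. One parenthetical slip: with the paper's conventions ($\Pers_1$ defined via bar length, $d(z,\Delta) = \frac{1}{2}\abs{z_2-z_1}$), the map $\beta_k$ on $\DD_1$ is $2$-Lipschitz with $\norm{\beta_k(D)}_{L^1} = 2\, d_1(D,0)$, not an isometry; this does not affect your argument.
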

\begin{remark}
This suggests that $\chi : \DD_p \to L^1(\R)$ is discontinuous as well.
\end{remark}
\begin{proof}
We give two different arguments which prove the statement. First, consider a diagram $D \notin \DD_1$ such that $D \in \DD_p$ and any other diagram $D' \in \DD_1 \cap \DD_p$. Then, $d_p(D,D')<\infty$, but the reverse triangle inequality entails 
\be
\norm{\norm{\beta_k(D)}- \norm{\beta_k(D')}}_{L^1} \leq \norm{\beta_k(D)- \beta_k(D')}_{L^1}  = \infty\,,
\ee 
since the lower bound is infinite. 
Second, suppose that $\beta_k$ is continuous. Then the map $\mathcal{D}_p \to \R$, given by $D \mapsto \norm{\beta_k(D)}_{L^1}$ must be continuous by continuity of the norm. However, by \cite[Proposition 5.1]{Divol_2019} this map cannot be continuous, since 
\be
\int_{\R} 1_{R_x}(z_1,z_2) \;dx = z_2-z_1  
\ee
is linear in the distance to the diagonal, and in particular not $O(\abs{z_2-z_1}^p)$ as would be required for continuity with respect to $d_p$. 
\end{proof}
\begin{remark}
Divol and Lacombe's work \cite[Proposition 5.1]{Divol_2019} also explain why it is necessary -- even within $\DD_1$ -- to regard the Betti and Euler curves as distributions. Indeed, notice that pointwise in $x$, $1_{R_x}$ doesn't decrease to $0$ at the required rate.
\end{remark}

\begin{corollary}[$d_1$-stability of Betti and Euler curves]
Let $D, D' \in \DD_1$, then
\be
\norm{\beta_k(D)-\beta_k(D')}_{L^1(\R)} \leq 2 \,d_1(D,D') \,.
\ee
\end{corollary}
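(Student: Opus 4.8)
The plan is to lift the comparison of the two Betti curves to the level of an optimal partial transport plan between $D$ and $D'$, and to reduce everything to a one-dimensional estimate on indicator functions of intervals. First I would fix an admissible transport plan $\pi$ on $\overline{\mathcal{X}} \times \overline{\mathcal{X}}$ for the partial transport distance $d_1$ of \cite[Definition 2.1]{Divol_2019}, so that the off-diagonal marginals of $\pi$ are $D$ and $D'$ and mass may be sent to the diagonal $\Delta$ freely. Since $1_{R_x}$ vanishes on $\Delta$ except at the single point $(x,x)$, which contributes only a Lebesgue-null set of levels and at most countably many of which can be charged by $\pi$, the marginal conditions give, for almost every $x$,
\be
\beta_k(D)(x) - \beta_k(D')(x) = \int_{\overline{\mathcal{X}} \times \overline{\mathcal{X}}} \left[ 1_{R_x}(z) - 1_{R_x}(z') \right] d\pi(z,z') \,.
\ee
I would then take the $L^1_x$ norm, bound the absolute value of the integral by the integral of the absolute value, and apply Tonelli (the integrand being nonnegative) to exchange the order of integration, arriving at
\be
\norm{\beta_k(D)-\beta_k(D')}_{L^1(\R)} \leq \int_{\overline{\mathcal{X}} \times \overline{\mathcal{X}}} \left[ \int_\R \abs{ 1_{R_x}(z) - 1_{R_x}(z') }\,dx \right] d\pi(z,z') \,.
\ee

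The heart of the matter is the inner integral. Writing $z=(z_1,z_2)$ and $z'=(z_1',z_2')$, the identity $1_{R_x}(z_1,z_2) = 1_{[z_1,z_2]}(x)$ already used in the preceding proposition shows that the inner integral equals $\norm{1_{[z_1,z_2]} - 1_{[z_1',z_2']}}_{L^1(\R)}$, the Lebesgue measure of the symmetric difference of the two bars. Decomposing each indicator as a difference of half-lines, $1_{[a,b]} = 1_{[a,\infty[} - 1_{]b,\infty[}$, and applying the triangle inequality in $L^1$ yields the elementary bound $\norm{1_{[z_1,z_2]} - 1_{[z_1',z_2']}}_{L^1(\R)} \leq \abs{z_1-z_1'} + \abs{z_2-z_2'} \leq 2\, d_{\R^2,\infty}(z,z')$. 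The same estimate covers the case where mass is matched to the diagonal: if $z'$ is the orthogonal projection of $z$ onto $\Delta$, the bar $[z_1',z_2']$ degenerates to a point, the left-hand side is $z_2-z_1$, and the right-hand side is $2\, d_{\R^2,\infty}(z,\Delta) = z_2 - z_1$, so the inequality persists (in fact as an equality).

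Substituting this uniform per-pair bound into the double integral gives $\norm{\beta_k(D)-\beta_k(D')}_{L^1(\R)} \leq 2 \int_{\overline{\mathcal{X}}\times\overline{\mathcal{X}}} d_{\R^2,\infty}(z,z')\,d\pi(z,z')$, and taking the infimum over admissible plans $\pi$ produces exactly $2\, d_1(D,D')$, as claimed; the analogous bound for the Euler curve then follows by combining the degree-wise estimates through the triangle inequality. The main obstacle I anticipate is not the geometric estimate, which is routine, but the measure-theoretic bookkeeping around the diagonal: one must justify rewriting $\beta_k(D)(x)$ as an integral against $\pi$ for almost every $x$ (controlling the at-most-countable set of levels at which $\pi$ charges a diagonal atom $(x,x)$), and one must verify that the per-pair bound degenerates correctly for mass sent to $\Delta$ so that a single inequality handles transported and annihilated mass simultaneously. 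Both points are dispatched by the observations above, but they are where genuine care is required.
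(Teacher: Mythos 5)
Your proposal is correct and follows essentially the same route as the paper's proof: fix a transport plan $\pi \in \Gamma(D,D')$, bound $\norm{\beta_k(D)-\beta_k(D')}_{L^1(\R)}$ by $\int_{\overline{\mathcal{X}}^2}\left[\int_\R \abs{1_{R_x}(z)-1_{R_x}(w)}\,dx\right]d\pi(z,w)$, establish the per-pair estimate $\int_\R \abs{1_{R_x}(z)-1_{R_x}(w)}\,dx \leq 2\,d_{\R^2,\infty}(z,w)$, and conclude by taking $\pi$ optimal. The only difference is that you explicitly supply what the paper compresses into ``we carefully check'' --- the half-line decomposition of the interval indicators and the bookkeeping for mass sent to the diagonal --- and both of these verifications are correct.
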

\begin{remark}
This shows that $\beta_k : \mathcal{D}_1 \to L^1(\R)$ is a $2$-Lipschitz representation. 
\end{remark}
\begin{proof}
Consider a transport map $\pi \in \Gamma(D,D')$. Then,
\begin{align*}
\norm{\beta_k(D)- \beta_k(D')}_{L^1(\R)} \leq \int_{\overline{\mathcal{X}}^2} \left[\int_\R \abs{1_{R_x}(z)-1_{R_x}(w)} \;dx \right] \;d\pi(z,w) \,.
\end{align*}
We carefully check that
\be
\int_\R \abs{1_{R_x}(z)-1_{R_x}(w)} \;dx  \leq 2 \,d_{\R^2, \infty}(z,w) \,,
\ee
which entails the result by taking $\pi$ to be an optimal transport for $d_1$.
\end{proof}

\begin{proposition}[Interpolation for optimal transport]
\label{prop:interpolationOT}
Let $0 < p  < q \leq \infty$ and $\theta \in\, ]0,1[$. Define $p_\theta$ by 
\be
\frac{1}{p_\theta} = \frac{\theta}{p} + \frac{1-\theta}{q} \,.
\ee
Then, for $\mu,\nu \in \mathcal{D}_{p}\cap \mathcal{D}_{q}$ 
\begin{align}
d_{p_\theta}(\mu,\nu) &\leq  2^{1-\theta} \; d_{p}^\theta(\mu,\nu) \,(\Pers_q(\mu) +\Pers_q(\nu))^{1-\theta} \\
d_{p_\theta}(\mu,\nu) &\leq  2^{\theta} \; d_q(\mu,\nu)^{1-\theta} \,(\Pers_p(\mu) +\Pers_p(\nu))^{\theta}  \,.
\end{align}
Consequently, if $p\leq r \leq q$, then $\mathcal{D}_{p}\cap \mathcal{D}_{q} \subset \mathcal{D}_{r}$.
\end{proposition}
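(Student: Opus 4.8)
The plan is to reduce both inequalities to the classical Lyapunov (log-convexity) inequality for $L^r$-norms taken with respect to a single, well-chosen transport plan, and then to control the ``wrong'' exponent by the persistence of the two measures. Throughout write $c(z,w) := d_{\R^2,\infty}(z,w)$ for the ground cost, let $\Gamma(\mu,\nu)$ denote the admissible partial transport plans (the diagonal $\Delta$ acting as an infinite reservoir, with $c(z,\Delta)=d_{\R^2,\infty}(z,\Delta)$), and recall that for any $\pi\in\Gamma(\mu,\nu)$ one has $d_r^r(\mu,\nu)\le \int c(z,w)^r\,d\pi(z,w)$, with equality when $\pi$ is $d_r$-optimal. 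I will also use the marginal identity
\be
\Pers_r(\mu)^r=\int_{\overline{\mathcal{X}}} d_{\R^2,\infty}(z,\Delta)^r\,d\mu(z)=\int d_{\R^2,\infty}(z,\Delta)^r\,d\pi ,
\ee
valid because the first marginal of $\pi$ on $\overline{\mathcal{X}}$ is $\mu$ and mass sitting on $\Delta$ contributes nothing.

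First I would record the interpolation step. For any fixed $\pi$, the definition of $p_\theta$ is exactly the Lyapunov condition, so
\be
\left(\int c^{p_\theta}\,d\pi\right)^{1/p_\theta}\le \left(\int c^{p}\,d\pi\right)^{\theta/p}\left(\int c^{q}\,d\pi\right)^{(1-\theta)/q},
\ee
(the last factor read as $\|c\|_{L^\infty(\pi)}^{1-\theta}$ when $q=\infty$), and hence $d_{p_\theta}(\mu,\nu)\le(\int c^p d\pi)^{\theta/p}(\int c^q d\pi)^{(1-\theta)/q}$ for every admissible $\pi$. To prove the first inequality I would take $\pi$ to be $d_p$-optimal, so that the first factor is exactly $d_p^\theta(\mu,\nu)$; to prove the second I would take $\pi$ to be $d_q$-optimal, so that the second factor is $d_q^{1-\theta}(\mu,\nu)$. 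Existence of these optimal plans for $\mu,\nu\in\mathcal{D}_p\cap\mathcal{D}_q$ I would import from the Divol--Lacombe framework.

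The remaining factor must be bounded by persistence, and this is where the structure of partial optimal transport enters. The key geometric input is that on the support of an optimal plan no pair $(z,w)$ is matched directly when routing both points to the diagonal is cheaper; comparing the two competing options gives $c(z,w)\le d_{\R^2,\infty}(z,\Delta)+d_{\R^2,\infty}(w,\Delta)$ on the support (points matched to $\Delta$ being the degenerate case where one summand vanishes). Combining this with the elementary inequalities $(a+b)^r\le 2^r(a^r+b^r)$ and $(a^r+b^r)^{1/r}\le a+b$ and the marginal identity yields $\int c^r d\pi\le 2^r\bigl(\Pers_r(\mu)^r+\Pers_r(\nu)^r\bigr)$; applied with $r=q$ and raised to the power $(1-\theta)/q$ this produces the factor $2^{1-\theta}(\Pers_q(\mu)+\Pers_q(\nu))^{1-\theta}$, and applied with $r=p$ and raised to the power $\theta/p$ it produces $2^{\theta}(\Pers_p(\mu)+\Pers_p(\nu))^{\theta}$, which are precisely the constants in the two displayed inequalities. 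The hard part is exactly this support estimate: it relies on a $d_r$-optimal plan existing and on its support being cyclically monotone for the augmented cost, which is the point I would not reprove; some care is also needed in the regime $p<1$, where the comparison through the diagonal is only available in its $\ell^p$ form and the constant degrades accordingly.

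Finally, the inclusion $\mathcal{D}_{p}\cap\mathcal{D}_{q}\subset\mathcal{D}_{r}$ for $p\le r\le q$ is immediate from the very same Lyapunov inequality, now applied not to a transport plan but to the single measure $\mu$ with integrand $z\mapsto d_{\R^2,\infty}(z,\Delta)$: choosing $\theta\in[0,1]$ with $\tfrac1r=\tfrac\theta p+\tfrac{1-\theta}q$ gives $\Pers_r(\mu)\le\Pers_p(\mu)^\theta\,\Pers_q(\mu)^{1-\theta}<\infty$, so $\mu\in\mathcal{D}_r$ whenever $\mu\in\mathcal{D}_p\cap\mathcal{D}_q$.
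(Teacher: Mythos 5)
Your proof is correct and follows essentially the same route as the paper's: a H\"older/Lyapunov interpolation of the cost against a single transport plan chosen to be $d_p$-optimal (resp.\ $d_q$-optimal), the triangle-through-the-diagonal estimate on the support of that plan justified by the rerouting-to-the-diagonal argument, and the marginal identity converting the remaining factor into $\Pers_q$ (resp.\ $\Pers_p$), with identical constants $2^{1-\theta}$ and $2^{\theta}$. The only cosmetic difference is that you derive the final inclusion $\mathcal{D}_p\cap\mathcal{D}_q\subset\mathcal{D}_r$ by applying Lyapunov directly to $d_{\R^2,\infty}(\cdot,\Delta)$ under $\mu$, which is a clean way of making the paper's ``consequently'' explicit (and your aside that the constant degrades when $p<1$, where the rerouting comparison only holds in its $\ell^p$ form, is a legitimate caveat that the paper itself glosses over).
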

\begin{proof}
We prove the first inequality, the second one following by analogy, by interverting the roles of $p$ and $q$. Let $\pi$ be an optimal transport for $d_{p}$. Applying H\"older's inequality,
\begin{align*}
d_{p_\theta}(\mu,\nu) &\leq \norm{d_{\R^2,\infty}}_{L^{p_\theta}(\pi)} \leq \norm{d_{\R^2,\infty}}_{L^{p}(\pi)}^{\theta} \norm{d_{\R^2,\infty}}_{L^{q}(\pi)}^{1-\theta} \\
&= d_{p}(\mu,\nu)^\theta \left[\int_{\overline{\mathcal{X}}^2} d_{\R^2,\infty}^q(z,z') \; d\pi(z,z') \right]^{\frac{1-\theta}{q}} \\
&\leq d_{p}(\mu,\nu)^\theta \left[2^q \int_{\overline{\mathcal{X}}^2} d_{\R^2,\infty}^q(z,\Delta) + d_{\R^2,\infty}^q(\Delta,z') \; d\pi(z,z') \right]^{\frac{1-\theta}{q}} \,,
\end{align*}
where the inequality on the last line holds everywhere on the support of $\pi$. This can be shown by defining 
\be
S=\{(z,z') \in \overline{\mathcal{X}}^2 \cap \supp(\pi) \, \vert \, d_{\R^2,\infty}(z,z') > d_{\R^2,\infty}(z,\Delta) + d_{\R^2,\infty}(z',\Delta) \} \,.
\ee
This set $S$ either has null or positive measure. If it has positive measure, then we can modify the transport plan $\pi$ by sending the projections of $S$ to the diagonal, thereby producing a transport plan of strictly inferior cost to that of $\pi$, which is a contradiction. Hence, $S$ is of null measure, so the equality holds over the support of the measure. This entails
\begin{align*}
d_{p_\theta}(\mu,\nu) \leq 2^{1-\theta}\; d_{p}(\mu,\nu)^\theta \,(\Pers_q(\mu)+\Pers_q(\nu))^{1-\theta} \,.
\end{align*}
If $q =\infty$, since $\pi$ is an optimal transport between $\mu$ and $\nu$ and $\mu,\nu \in \mathcal{D}_\infty$, $\pi$ itself must have compact support and the diameter of the support is bounded above by $\Pers_\infty(\mu) \vee \Pers_\infty(\nu)$, so the inequality of the proposition follows.
\end{proof}
\begin{remark}
Taking $0<p<1\leq q$, proposition \ref{prop:interpolationOT} allows us to say that, despite $\beta_k$ and $\chi$ being discontinuous on $\DD_q$, these functionals are continuous on $\DD_p \cap \DD_q$ (with $p<q$) equipped with the $d_q$ metric. As shown in this paper, we may always do this for diagrams stemming from functions in $W^{n,s}(X,\R)$, provided that $n>d$. This reconciles our results with the ones found in \cite{CurryTurner_2018}, where the case of subanalytic functions was studied (in which case $\Dgm(f) \in \DD_0 \cap \DD_\infty$).
\end{remark}

\begin{theorem}[Stability of Euler and Betti curves under perturbations in distribution]
Let $f$ and $g$ be two stochastic processes on a $d$-dimensional compact Riemannian manifold $X$, defined on a common probability space $(\Omega, \mathcal{F},\PP)$ such that almost surely $f,g \in W^{n,s}(X,\R)$ with $\frac{d}{n}<1$. Then, for every $\frac{d}{n}<\al <1$, there exists a constant $C = C_{X,n,s}$ such that, 
\be
W_{1,L^1}(\beta_k(f)_\sharp\PP,\beta_k(g)_\sharp\PP) \leq C \left[\norm{f}_{L^1(\Omega,W^{n,s})}^\al+\norm{g}_{L^1(\Omega,W^{n,s})}^\al\right] W_{1,L^\infty}^{1-\al}(f_\sharp \PP, g_\sharp \PP) \,,
\ee
and a similar inequality holds by replacing the Betti curves with the Euler curves $\chi$.
\end{theorem}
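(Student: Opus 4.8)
The plan is to factor the Betti curve as $W^{n,s}(X,\R) \xrightarrow{\Dgm_k} \DD_1 \xrightarrow{\beta_k} L^1(\R)$ and to observe that all of the genuinely non-Lipschitz behaviour has been quarantined in the first arrow, which is exactly what the preceding stability theorems control. The first thing I would record is that, since $\frac{d}{n}<1$, Corollary \ref{cor:FinitePersp} places $\Dgm_k(f)$ and $\Dgm_k(g)$ in $\DD_1$ almost surely, so that the laws $\mu_f := (\Dgm_k \circ f)_\sharp \PP$ and $\mu_g := (\Dgm_k \circ g)_\sharp \PP$ are well-defined probability measures on $\DD_1$; the hypothesis $f,g \in L^1(\Omega,W^{n,s})$ together with Corollary \ref{cor:FinitePersp} shows they have finite first $d_1$-moment, so they lie in $\mathcal{P}_1(\DD_1)$ and we are within the hypotheses of the stochastic stability of representations theorem.

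For the Betti curve the argument is then a two-step chaining. Because $\beta_k : \DD_1 \to L^1(\R)$ is a $2$-Lipschitz representation (the $d_1$-stability corollary), the stochastic stability of representations theorem with $\mathcal{B}=L^1(\R)$, $\al = 1$ and $q=1$ (so $\DD_p = \DD_1$) gives
\[
W_{1,L^1}(\beta_k(f)_\sharp \PP, \beta_k(g)_\sharp \PP) \leq 2\, W_{1,d_1}(\mu_f,\mu_g).
\]
I would then bound $W_{1,d_1}(\mu_f,\mu_g)$ using the Stochastic Wasserstein Stability theorem with $p=1$, legitimate precisely because $\frac{d}{n}<1$. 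The decisive point is the choice of exponents: taking $q=\al$, $r=\tfrac{1}{\al}$ and $s=\tfrac{1}{1-\al}$ satisfies $\tfrac1r+\tfrac1s=1$ and $(p-q)s=(1-\al)s=1\geq 1$, and it simultaneously collapses the moment factor $\expect{\norm{f}_{W^{n,s}}^{qr}}^{1/r}$ to $\norm{f}_{L^1(\Omega,W^{n,s})}^{\al}$ (since $qr=1$ and $\tfrac1r=\al$) and the transport factor $W_{(p-q)s,\infty}^{1-q/p}$ to $W_{1,\infty}^{1-\al}$. Chaining the two displays and absorbing the constant $2$ yields the claim for $\beta_k$.

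For the Euler curve I would not apply the representation theorem in each degree separately, since $\chi=\sum_{k=0}^{d}(-1)^k\beta_k$ entangles all homological degrees through one underlying function; instead I would transport through a single plan on the source. Concretely, fix an optimal $\pi$ for $W_{1,\infty}(f_\sharp\PP,g_\sharp\PP)$, so that for $\pi$-a.e. pair $(u,v)$ the $d_1$-stability corollary gives $\norm{\chi(u)-\chi(v)}_{L^1}\leq 2\sum_{k} d_1(\Dgm_k(u),\Dgm_k(v))$, while the deterministic Wasserstein stability theorem with $p=1,\,q=\al$ bounds each summand by $C\bigl(\norm{u}_{W^{n,s}}^{\al}\vee\norm{v}_{W^{n,s}}^{\al}\bigr)\norm{u-v}_\infty^{1-\al}$. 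Pushing $\pi$ forward through $\chi$ furnishes a coupling of $\chi(f)_\sharp\PP$ and $\chi(g)_\sharp\PP$, so integrating the pointwise bound against $\pi$ and applying Hölder with the same exponents $r=\tfrac1\al$, $s=\tfrac1{1-\al}$ (using $(\max)^{1/\al}=\max(\cdot)$ and the subadditivity $(a+b)^\al\leq a^\al+b^\al$ for $\al\le 1$) reproduces the right-hand side, with a constant now also counting the $d+1$ degrees.

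The substantive work is not conceptual but in the bookkeeping of exponents: one must check that the single assignment $q=\al$, $r=\tfrac1\al$, $s=\tfrac1{1-\al}$ meets every inequality required by the Stochastic Wasserstein Stability theorem ($\frac{d}{n}<q<p=1$, $\tfrac1r+\tfrac1s=1$, $(p-q)s\geq1$) while outputting exactly the exponents $\al$ and $1-\al$ of the statement. I expect the only real pitfall to be in the Euler case: the temptation to couple each degree by its own optimal plan must be avoided, as that would not produce a joint coupling of the realizations $(u,v)$ needed to turn the pointwise estimate into a bound on $W_{1,L^1}$; using one common plan $\pi$ on the source is what makes the alternating sum legitimate.
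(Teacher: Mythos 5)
Your argument for the Betti curves is exactly the paper's proof: recognize $\beta_k$ as a $2$-Lipschitz representation $\DD_1 \to L^1(\R)$, then chain the stochastic stability of representations theorem with the Stochastic Wasserstein Stability theorem using precisely the paper's exponent choices $q=\al$, $p=1$, $r=\tfrac1\al$, $s=\tfrac1{1-\al}$; the bookkeeping you verify ($\tfrac1r+\tfrac1s=1$, $(p-q)s=1$, $qr=1$) is the entire content of that proof.

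Where you diverge is the Euler case, and the divergence is a genuine improvement in rigor rather than a different strategy. The paper disposes of $\chi$ with the single word ``similar,'' but as you correctly note, the representation theorem as stated applies to a functional $\Psi:\DD_p\to\mathcal{B}$ of one diagram, whereas $\chi=\sum_{k=0}^d(-1)^k\beta_k$ depends on all homological degrees of the same realization jointly; moreover one cannot simply sum the degreewise Wasserstein bounds, since optimal couplings for different degrees need not be consistent. Your fix---fix one optimal plan $\pi$ for $W_{1,\infty}(f_\sharp\PP,g_\sharp\PP)$, apply the deterministic Wasserstein stability theorem realization by realization, push $\pi$ forward through $\chi$ to obtain a coupling of $\chi(f)_\sharp\PP$ and $\chi(g)_\sharp\PP$, then apply H\"older with the same exponents---is correct (the marginal identities give $\int\norm{u}_{W^{n,s}}\,d\pi=\norm{f}_{L^1(\Omega,W^{n,s})}$ and $\int\norm{u-v}_\infty\,d\pi=W_{1,\infty}(f_\sharp\PP,g_\sharp\PP)$, and subadditivity of $t\mapsto t^\al$ handles the max), at the cost of a harmless factor of $d+1$ in the constant. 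In effect you have written out the coupling-at-the-source argument that the paper's cited proof of the stochastic stability theorem performs internally, and which is what actually legitimizes the alternating sum.
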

\begin{proof}
Recognizing that $\beta_k$ are Lipschitz representations, it is a simple application of the stochastic stability theorem and its analog for representations, where we set $q = \al$, $p=1$ and $s = \frac{1}{1-\al}$ and $r = \frac{1}{\al}$.
\end{proof}
\begin{remark}
Using the Kantorovich-Rubenstein duality, a lower bound of $W_{1,L^1}$ can be easily established. Recall that \cite[Particular case 5.16]{Villani_2009}
\be
\sup_{\psi \in \Lip_1(L^1(\R),\R)} \expect{\psi(\chi(f))- \psi(\chi(g))} \leq W_{1,L^1}(\chi(f)_\sharp\PP,\chi(g)_\sharp\PP) \,.
\ee
This point of view further justifies our view of $\beta_k$ and $\chi$ as distributions. Indeed, for $\vp \in \DD(\R)$, the map 
\be
\psi : \chi \mapsto \int_\R \chi(x)\vp(x) dx \,,
\ee
is clearly Lipschitz. This distributional view of the Euler and Betti curves is (unsurprisingly) also stable with respect to perturbations in the distributions of the random fields from which they stem from.

A particularly interesting family of Lipschitz functionals (in light of recent developments \cite{Lebovici_2021}) to consider is
\be
\psi_\theta : \chi \mapsto \int_\R e^{-ix\theta}\chi(x) \;dx \,. 
\ee
Using the Kantorovich-Rubenstein duality on the real and imaginary components of the above integral separately, and taking the supremum over all $\theta$ we arrive that the conclusion that
\be
\norm{\expect{\mathcal{F}\chi(f)-\mathcal{F}\chi(g))}}_{L^\infty(\R)} \leq \sqrt{2} \,W_{1,L^1}(\chi(f)_\sharp\PP,\chi(g)_\sharp\PP) \,,
\ee
where $\mathcal{F}$ denotes the Fourier transform. Interestingly, one can apply the same reasoning for different integral transforms, provided that their kernel is such that the transform is Lipschitz on $L^1$. Of course, these arguments are all also valid for the Betti curves as well. Applying the stability of the Euler and Betti curves under perturbations in the distributions of $f$ and $g$, we arrive at the conclusion that these transforms are stable with respect to changes in distribution.
\end{remark}

\begin{remark}
Looking at Euler curves, it is possible to recover the Euler characteristic of level sets of $f$. Since for every $\veps>0$, $\{f > x-\veps\} \cap \{f < x+\veps\} = \{x-\veps<f<x+\veps\}$ and that $\{f > x-\veps\} \cup \{f < x+\veps\}  = X$, by the valuation property of the Euler characteristic, we recover
\be
\chi(\{x-\veps<f<x+\veps\}) + \chi(X) = \chi(f,x-\veps)+\chi(-f,-x-\veps) \,,
\ee
by taking $\veps \to 0$, we obtain $\chi(f=x)$, whenever it is defined.
\end{remark}

\bibliographystyle{abbrv}
\bibliography{PhDThesis}

\end{document}